\documentclass[11pt]{amsart}
\usepackage{amssymb,amscd,  latexsym, graphicx, mathrsfs, enumerate}

\setlength{\textwidth}{460pt} \setlength{\hoffset}{-45pt}

\newcommand{\nc}{\newcommand}
\numberwithin{equation}{section}
\newtheorem{theorem}{Theorem}[section]
\newtheorem{prop}[theorem]{Proposition}
\newtheorem{importnota}[theorem]{Important Notation}
\newtheorem{prblm}[theorem]{Problem}
\newtheorem{notation}[theorem]{Notation}
\newtheorem{caution}[theorem]{Caution}
\newtheorem{remark}[theorem]{Remark}
\newtheorem{lemma}[theorem]{Lemma}
\newtheorem{construction}[theorem]{Construction}
\newtheorem{corollary}[theorem]{Corollary}
\newtheorem{example}[theorem]{Example}
\newtheorem{conclusion}[theorem]{Conclusion}
\newtheorem{triviality}[theorem]{Triviality}
\newtheorem{proto}[theorem]{Prototype Quasifibration}
\newtheorem{cauex}[theorem]{Cautionary Example}
\newtheorem{propositiondef}[theorem]{Proposition-Definition}
\newtheorem{subth}{Nuisance}[theorem]
\newtheorem{ssubth}{ }[subth]
\newtheorem{conjecture}[theorem]{Conjecture}
\newtheorem{sidest}[theorem]{Side Story}
\newtheorem{miniexample}[theorem]{Example}
\theoremstyle{definition}
\newtheorem{defin}[theorem]{Definition}

\nc\tri[1]{\begin{triviality}}
\nc\side[1]{\begin{sidest}}
\nc\conj[1]{\begin{conjecture}}
\nc\prodef[1]{\begin{propositiondef}}
\nc\prt[1]{\begin{proto}}
\nc\lem[1]{\begin{lemma}}
\nc\sblm[1]{\begin{sublemma}}
\nc\pro[1]{\begin{prop}}
\nc\thm[1]{\begin{theorem}}
\nc\cor[1]{\begin{corollary}}
\nc\dfn[1]{\begin{defin}}
\nc\sthm[1]{\begin{subth}}
\nc\exm[1]{\begin{example}}
\nc\miniexm[1]{\begin{miniexample}}
\nc\plm[1]{\begin{prblm}}
\nc\rmk[1]{\begin{remark}}
\nc\subrmk[1]{\begin{subremark}}
\nc\ntn[1]{\begin{notation}}
\nc\cau[1]{\begin{caution}}
\nc\imn[1]{\begin{importnota}}
\nc\cax[1]{\begin{cauex}}
\nc\con[1]{\begin{construction}}
\nc\ssthm[1]{\begin{ssubth}}
\nc\cnc[1]{\begin{conclusion}}
\nc\elem{\end{lemma}}
\nc\esblm{\end{sublemma}}
\nc\eside{\end{sidest}}
\nc\econj{\end{conjecture}}
\nc\eprodef{\end{propositiondef}}
\nc\eprt{\end{proto}}
\nc\ethm{\end{theorem}}
\nc\ecor{\end{corollary}}
\nc\edfn{\end{defin}}
\nc\esthm{\end{subth}}
\nc\epro{\end{prop}}
\nc\etri{\end{triviality}}
\nc\eexm{\end{example}}
\nc\eminiexm{\end{miniexample}}
\nc\ermk{\end{remark}}
\nc\subermk{\end{subremark}}
\nc\eplm{\end{prblm}}
\nc\ecau{\end{caution}}
\nc\ecax{\end{cauex}}
\nc\eimn{\end{importnota}}
\nc\entn{\end{notation}}
\nc\econ{\end{construction}}
\nc\ecnc{\end{conclusion}}
\nc\essthm{\end{ssubth}}

\newcommand{\C}{\mathbb{C}}
\newcommand{\R}{\mathbb{R}}
\newcommand{\Q}{\mathbb{Q}}

\newcommand{\Z}{\mathbb{Z}}

\newcommand{\A}{\mathbb{A}}

\newcommand{\G}{\Gamma}

\newcommand{\ds}{\displaystyle}

\newcommand{\tr}{{\rm Tr}}

\newcommand{\lra}{\longrightarrow}

\newcommand{\bs}{\backslash}

\renewcommand{\Bbb}{\mathbb}
\makeatletter
\def\@xfootnote[#1]{%
  \protected@xdef\@thefnmark{#1}%
  \@footnotemark\@footnotetext}
\makeatother

\title[Miyawaki type lift for $GSpin(2,10)$]
{Miyawaki type lift for $GSpin(2,10)$}
\author{Henry H. Kim and Takuya Yamauchi}
\keywords{Miyawaki type lift, Langlands functoriality}
\thanks{The first author is partially supported by NSERC. The second author
is partially supported by JSPS Grant-in-Aid for Scientific Research (C) No.15K04787}
\subjclass[2010]{}
\address{Henry H. Kim \\
Department of mathematics \\
 University of Toronto \\
Toronto, Ontario M5S 2E4, CANADA \\
and Korea Institute for Advanced Study, Seoul, KOREA}
\email{henrykim@math.toronto.edu}

\address{Takuya Yamauchi \\
Department of mathematics, Faculty of Education\\
Kagoshima University\\
Korimoto 1-20-6 Kagoshima 890-0065, JAPAN}
\email{yamauchi@edu.kagoshima-u.ac.jp}

\begin{document}
\begin{abstract}
Let $\frak T_2$ (resp. $\frak T$) be the Hermitian symmetric domain of $Spin(2,10)$ (resp. $E_{7,3}$).
In the previous work \cite{KY}, we constructed holomorphic cusp forms on $\frak T$ from elliptic cusp forms with respect to $SL_2(\Z)$.
By using such cusp forms we construct holomorphic cusp forms on $\frak T_2$ which are similar to Miyawaki lift in symplectic groups
established by T. Ikeda \cite{Ik1}. 
\end{abstract}
\maketitle

\section{Introduction}
Let $\A=\A_\Q$ be the ring of adeles of $\Q$. 
For a reductive group $G$ over $\Q$ of higher rank with Hermitian symmetric domain $\frak D$, it is important to construct
cuspidal representations of $G(\A)$ which give rise to holomorphic cusp forms on $\frak D$. In general it would be difficult to construct cusp forms directly. One way is to use Langlands functoriality, namely, consider another smaller group $H$ with an $L$-group homomorphism $r: {}^L H\lra {}^L G$, and then Langlands functoriality predicts a functorial lift from automorphic representations of $H(\A)$ to those of $G(\A)$. Some of cases are established by using the trace formula or the theta lift. These are very powerful tools, but the former never gives any explicit construction for classical forms and the latter can be made explicit with a careful choice of test functions, but it usually gives rise to automorphic representations which are generic, away from holomorphic forms, otherwise 
we need to consider a non-trivial level.

Contrary to these methods, Ikeda \cite{Ik} gave an explicit construction of cusp forms for the symplectic group $Sp_{n}$ (with $\Q$-rank $n$) from elliptic cusp forms of $GL_2(\A)$ with respect to $SL_2(\Z)$. Such a cusp form is called Ikeda lift.
In \cite{Ik1} Ikeda studied an integral similar to (\ref{integral}) below, obtained by substituting  the role of Eisenstein series in the
usual pullback formula with the Ikeda lift. Then under the assumption of nonvanishing of the integral, he showed that 
it gives rise to an essentially new cusp form for symplectic groups which is called Miyawaki lift.
The existence of both lifts is compatible with the conjectural Arthur's multiplicity formula which would be a theorem
soon \cite{Art}.

In this paper we pursue an analogue of Miyawaki type lift for $GSpin(2,10)$ by using our previous work \cite{KY}.
We now explain the main theorem. We refer the next section for several notations which appear below (or Section 2 of \cite{KY}).

Let $G=E_{7,3}$ and $G'=GSpin(2,10)$, which split at every prime $p$.
Let $\frak T_2$ (resp. $\frak T$) be the Hermitian symmetric domain of $PGSpin(2,10)(\R)^0$ (resp. $E_{7,3}(\R)$).
Any elements of $\frak T$ and $\frak T_2$ are described in terms of Cayley numbers $\frak C_\C$  and we can write $g\in \frak T$ as
$g=\begin{pmatrix} Z&w\\{}^t\overline{w}&\tau\end{pmatrix}$ with $Z\in \frak T_2$, $w\in \frak C^2_\C$, and
$\tau\in \mathbb{H}=\{z\in\C\ |\ {\rm Im} z>0\}$.
Let $S_{2k}(SL_2(\Z))$ be the space of elliptic cusp forms of weight $2k\geq 12$ with respect to $SL_2(\Z)$.
For each normalized Hecke eigenform $f=\ds\sum_{n=1}^\infty c(n)q^n,\ q=\exp(2\pi\tau \sqrt{-1}),\ \tau\in \mathbb{H}$, in $S_{2k}(SL_2(\Z))$, 
let $F_f$ be the Ikeda type lift on $\frak T$ of $f$ which was constructed in \cite{KY}.
This is a Hecke eigen cusp form of weight $2k+8$ with respect to $G(\Z)$.

For a normalized Hecke eigenform $h\in S_{2k+8}(SL_2(\Z))$, consider the integral
\begin{equation}\label{integral}
\mathcal{F}_{f,h}(Z)=\int_{SL_2(\Z)\backslash \Bbb H} F_f\begin{pmatrix} Z&0\\0&\tau\end{pmatrix} \overline{h(\tau)}
({\rm Im} \tau)^{2k+6}\, d\tau.
\end{equation}
Note that $\mathcal{F}_{f,h}(Z)$ is a cusp form (possibly zero) of weight $2k+8$ with respect to $Spin(2,10)(\Z)$.

For each prime $p$, let $\{\alpha_p, \alpha_p^{-1}\}$ and $\{\beta_p, \beta_p^{-1}\}$ be the Satake parameters of $f, h$ at $p$, resp.
Let $\pi_f, \pi_h$ be the cuspidal representations attached to $f$ and $h$ resp., and let $L(s,\pi_f)$, $L(s,\pi_h)$ be 
their automorphic $L$-functions. 

For a technical reason, we assume the Langlands functorial transfer of automorphic representations of $PGSpin(2,10)(\Bbb A)$ to $GL_{12}(\Bbb A)$:
Namely, given a cuspidal representation of $PGSpin(2,10)(\Bbb A)$ which is unramified at every prime $p$, there exists an automorphic representation of $GL_{12}(\Bbb A)$ which is unramified at every prime $p$, and their Satake parameters correspond under the $L$-group homomorphism ${}^L GSpin(2,10)=GSO(12,\C)\hookrightarrow GL_{12}(\C)$. The transfer is a composition of two transfers:
The transfer of automorphic representations of $PGSpin(2,10)(\Bbb A)$ to the split group $PGSpin(12,\Bbb A)$ is the Jacquet-Langlands correspondence.
Since $PGSpin(12)=PGSO(12)$, we can consider automorphic representations of $PGSO(12,\Bbb A)$ as automorphic representations of $SO(12,\Bbb A)$ with the trivial central character. The transfer of automorphic representations of $SO(12,\Bbb A)$ to $GL_{12}(\Bbb A)$ is now complete by Arthur \cite{Art}.

We prove

\begin{theorem} \label{main} Assume that $\mathcal{F}_{f,h}$ is not identically zero. Assume also the existence of the functorial transfer from 
$PGSpin(2,10)(\Bbb A)$ to $GL_{12}(\Bbb A)$.
Then 
\begin{enumerate}
\item  The cusp form $\mathcal{F}_{f,h}$ is a Hecke eigenform, and hence gives rise to a cuspidal representation $\Pi_{f,h}$ of $G'(\A)$ with the trivial central character, which is unramified at every prime $p$.
\item Let
$\Pi_{f,h}=\Pi_\infty\otimes \otimes_p' \Pi_p$. For each prime $p$, the Satake parameter of $\Pi_p$ is given by 
$$\{ (\beta_p\alpha_p)^{\pm 1}, \ (\beta_p\alpha_p^{-1})^{\pm 1} ,\, 1, 1, 
p^{\pm 1},\ p^{\pm 2},\ p^{\pm 3}\}
$$
\item The degree 12 standard L-function of the cuspidal representation $\Pi_{f,h}$  is given by  
$$L(s,\Pi_{f,h})=L(s,\pi_f\times \pi_h)\zeta(s)^2 \zeta(s\pm 1)\zeta(s\pm 2)\zeta(s\pm 3),
$$  
where the first $L$-function is the Rankin-Selberg $L$-function.
\item The transfer of $\Pi_{f,h}$ to $GL_{12}(\Bbb A)$ is $(\pi_f\boxtimes\pi_h)\boxplus 1_{GL_7}\boxplus 1$, where $1_{GL_7}$ is the trivial representation of $GL_7(\Bbb A)$.
\end{enumerate}
\end{theorem}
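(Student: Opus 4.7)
My plan is to establish the theorem by computing the Satake parameters of $\Pi_{f,h}$ at every (unramified) prime via a pullback/branching analysis, and then to invoke the assumed functorial transfer to $GL_{12}(\A)$ together with strong multiplicity one on $GL_{12}$ to rigidify $\Pi_{f,h}$ and deduce each of (1)--(4).

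First I would check that $\mathcal{F}_{f,h}$ is a holomorphic cusp form of weight $2k+8$ on $\frak T_2$ for $Spin(2,10)(\Z)$. This is standard: pairing the cusp form $F_f$ against $\overline{h(\tau)}({\rm Im}\,\tau)^{2k+6}\,d\tau$ and integrating over $SL_2(\Z)\bs\h$ preserves cuspidality, holomorphy in $Z$, and the weight. Under the non-vanishing hypothesis, $\mathcal{F}_{f,h}$ generates an automorphic representation $\Pi_{f,h}$ of $G'(\A)$, whose central character is trivial, being inherited from $F_f$ and $h$.

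The heart of the argument is the Satake parameter computation. Since $G'$ is split at every $p$, every prime is unramified. The Ikeda-type lift $F_f$ is a Hecke eigenform on $G(\A)$ with Satake parameter $t_p(F_f)\in E_7(\C)$ known explicitly from \cite{KY}. The real inclusion $Spin(2,10)\times SL_2\hookrightarrow E_{7,3}$ yields, at the dual level, an L-homomorphism $GSpin(12,\C)\times SL_2(\C)\to E_7(\C)$, with respect to which the $56$-dimensional minuscule representation of $E_7$ decomposes under $Spin(12)\cdot SL_2$ as $(\Delta^-,\mathbf{1})\oplus(V_{12},V_2)$ (dimensions $32+12\cdot 2=56$). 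Pullback of $F_f$ along this subgroup, in combination with the compatibility of branching with the Satake isomorphism of spherical Hecke algebras, converts $t_p(F_f)$ into a Hecke eigensystem on $GSpin(12,\C)\times SL_2(\C)$. The Petersson pairing against the Hecke eigenform $h$ extracts precisely the component whose $SL_2$-parameter is $\{\beta_p,\beta_p^{-1}\}$; the surviving $GSpin(12,\C)$-parameter must then be the list in (2), verified by directly matching against the explicit form of $t_p(F_f)$ from \cite{KY}.

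Once the Satake parameters are in hand, parts (3) and (4) follow by Euler-product bookkeeping. The degree-$12$ standard $L$-function via $GSO(12,\C)\hookrightarrow GL_{12}(\C)$ factors as in (3), with the four parameters $(\beta_p\alpha_p)^{\pm 1},(\beta_p\alpha_p^{-1})^{\pm 1}$ contributing $L(s,\pi_f\times\pi_h)$ and the remaining eight contributing $\zeta(s)^2\zeta(s\pm 1)\zeta(s\pm 2)\zeta(s\pm 3)$. Part (4) then reads off the isobaric sum $(\pi_f\boxtimes\pi_h)\boxplus 1_{GL_7}\boxplus 1$, which is uniquely characterized by the Satake data and strong multiplicity one on $GL_{12}$. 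Finally, (1) follows because the assumed functorial transfer plus strong multiplicity one pins down $\Pi_{f,h}$ uniquely among cuspidal representations of $G'(\A)$ with the prescribed local data, forcing $\mathcal{F}_{f,h}$ to be a Hecke eigenform generating an irreducible cuspidal $\Pi_{f,h}$. The main technical obstacle will be the branching step: carefully matching L-group conventions between $G=E_{7,3}$ and $G'=GSpin(2,10)$ (including the $GSpin$/$Spin$ and simply-connected/adjoint distinctions on the dual side), identifying the precise L-embedding $GSpin(12,\C)\times GL_2(\C)\hookrightarrow {}^L G$, and verifying compatibility of the pullback of holomorphic cusp forms with the branching of spherical Hecke algebras at each prime. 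The non-vanishing of $\mathcal{F}_{f,h}$ is assumed throughout, as in Ikeda's original Miyawaki construction \cite{Ik1}.
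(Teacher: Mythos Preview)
Your proposal has a genuine gap at the ``heart of the argument,'' the Satake-parameter computation. You assert that branching of the $56$-dimensional representation of $E_7(\C)$ under the dual embedding $GSpin(12,\C)\times SL_2(\C)\hookrightarrow E_7(\C)$, together with ``compatibility of branching with the Satake isomorphism,'' converts $t_p(F_f)$ directly into the desired $GSpin(12)$-parameter after pairing against $h$. But there is no such compatibility: restriction of an unramified representation of $G(\Q_p)$ to the subgroup $H(\Q_p)$ is not governed by $L$-group branching, and the spherical Hecke algebras of $G$ and $H$ are not related by a homomorphism making your step valid. What one actually has is that $\pi_p\simeq {\rm Ind}_P^G|\nu|^{2s_p}$ is a degenerate principal series, and its restriction to $H$ decomposes (in the Grothendieck group) according to the double cosets $P(\Q_p)\backslash G(\Q_p)/H(\Q_p)$. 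This is precisely what the paper computes: there are four cosets, two are ruled out by unitarity, and the remaining two give \emph{two} possible shapes $(I)_p$ and $(II)_p$ for the Satake parameter, each carrying a sign ambiguity $\varepsilon_p\in\{\pm1\}$ and, in case $(I)_p$, an undetermined scalar $b_p\in\C^\times$. Your purely local argument, even if made precise, could not do better.

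Consequently you have also understated the role of the assumed transfer to $GL_{12}$. In the paper it is not merely used at the end to certify that $\mathcal F_{f,h}$ is an eigenform; it is the essential global input that (i) forces the shape to be $(I)_p$ for all $p$ (case $(II)_p$ would force $\alpha_p=\beta_p$ for all $p$, impossible since $f$ and $h$ have different weights), (ii) pins down $b_p=1$ via Tadic's classification of spherical unitary representations of $GL_N$, and (iii) removes the sign $\varepsilon_p$ by identifying the $GL_4$-piece as $\pi_f\boxtimes\pi_h$ through $\wedge^2$ and the results of Asgari--Raghuram and Ramakrishnan. Only after these ambiguities are resolved does one conclude that the local components are uniform in $p$, hence that $\mathcal H(G'(\A_f))\cdot\mathcal F_{f,h}$ is irreducible and $\mathcal F_{f,h}$ is a Hecke eigenform. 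Your sketch inverts this logic and omits the mechanism by which the ambiguities are eliminated.
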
 
We first show (Proposition \ref{key}) that the Satake parameter of $\Pi_p$ is given by 

\begin{eqnarray*}
&& (I)_p:\ \{\varepsilon_p(\beta_p\alpha_p)^{\pm 1},\ \varepsilon_p(\beta_p\alpha_p^{-1})^{\pm 1},\ b_p^{\pm 1},\ (b_p p)^{\pm 1},\ (b_p p^2)^{\pm 1},\ (b_p p^3)^{\pm 1}\},
\quad {\rm or}   \\
&&(II)_p:\ \{ \varepsilon_p(\beta_p\alpha_p)^{\pm 1}, \ \varepsilon_p(\beta_p\alpha_p^{-1})^{\pm 1}, 
\varepsilon_p(\beta_p\alpha_p^{-1}p)^{\pm 1},\ \varepsilon_p(\beta_p\alpha_p^{-1}p^2)^{\pm 1},\ 
\varepsilon_p(\beta_p\alpha_p^{-1}p^3)^{\pm 1},\ 
\varepsilon_p(\beta_p\alpha_p^{-1}p^4)^{\pm 1}\},
\end{eqnarray*}
where $\varepsilon_p\in \{\pm 1\}$ and $b_p\in\Bbb C^\times$. Using the functorial transfer, in Section 6, we prove that only $(I)_p$ occurs, remove the sign ambiguity and $b_p=1$.

\begin{remark} If we take $h=E_{2k+8}$, the Eisenstein series of weight $2k+8$, the integral (\ref{integral}) still makes sense and defines a cusp form of weight $2k+8$ with respect to $Spin(2,10)(\Bbb Z)$. If $\mathcal F_{f,E_{2k+8}}$ is not zero, then it gives rise to a cuspidal representation $\Pi_{f,E_{2k+8}}$ of $GSpin(2,10)$, and the standard $L$-function of $\Pi_{f,E_{2k+8}}$ is 
$$L(s,\Pi_{f,E_{2k+8}})=L(s+\frac 12, \pi_f)L(s-\frac 12,\pi_f)\zeta(s)^2\zeta(s\pm 1)\zeta(s\pm 2)\zeta(s\pm 3).
$$
\end{remark}

\begin{remark} Here $\Pi_\infty$ is a holomorphic discrete series of the lowest weight $2k+8$.
Since $f$ and $h$ have different weights, they can never be equal. Therefore $L(s,\pi_f\times \pi_h)$ is entire.
\end{remark}

\begin{remark} Note that
${}^L Spin(2,10)=PGSO(12,\C)$, and $PGSO(12,\C)$ does not have a $12$-dimensional representation. 
The minimum dimension among of the algebraic irreducible representations of $PGSO(12,\C)$ is $66$ by 
Weyl's dimension formula and that is given by 
${\rm Ad}:PGSO(12,\C) \lra GL({\rm Lie}(PGSO(12,\C)) \cong GL_{66}(\C)$.  
Therefore, given a cuspidal representation $\pi$ of $Spin(2,10)$, we cannot define the degree 12 standard $L$-function of $\pi$.
However, ${}^L GSpin(2,10)=GSO(12,\C)$, and $GSO(12,\C)$ has a $12$-dimensional representation. Since $PGSpin(2,10)=PGSO(2,10)$,
our form $\Pi_{f,h}$ can be considered as a cuspidal representation of $GSpin(2,10)$ with the trivial central character.

This situation is similar to Siegel cusp forms. Given a Siegel cusp form $F$ on a degree 2 Siegel upper half plane, we need to consider a cuspidal representation $\pi_F$ of $GSp_4$, rather than
$Sp_4$ in order to define the degree 4 spin $L$-function.
\end{remark}

\begin{remark}  We give a conjectural Arthur parameter of $\Pi_{f,h}$:
Let $\phi_f, \phi_h: \mathcal L\lra SL_2(\C)$ be the hypothetical Langlands parameter attached to $f,h$, resp. 
We have the tensor product map $SL_2(\C)\times SL_2(\C)\lra SO_4(\C)$. [\cite{OV}, page 88. Use the identification $SL_2(\C)=Sp_1(\C)$, and we have a representation of $SL_2(\C)\times SL_2(\C)$ on $\C^2\otimes\C^2\simeq \C^4$. It defines a symmetric, non-degenerate bilinear form on $\C^4$.] Then
we have
$\phi_f\otimes\phi_h: \mathcal L\lra SO_4(\Bbb C).$
The distinguished unipotent orbit $(7,1)$ of $SO_8(\C)$ gives rise to a map
$SL_2(\C)\lra SO_8(\C)$. Hence it defines a map
$\phi_u: \mathcal L\times SL_2(\Bbb C)\lra SO(8,\Bbb C)$. Then consider
$$\phi=(\phi_h\otimes \phi_f)\oplus  \phi_u : \mathcal L\times SL_2(\C)\lra SO_4(\C)\times SO_8(\C)\subset GSO_{12}(\C).
$$
We expect that $\phi$ parametrizes $\Pi_{f,h}$.
\end{remark}

This paper is organized as follows. In Section 2, we recall several facts about the Hermitian symmetric domain of $Spin(2,10)$ or $PGSpin(2,10)=PGSO(2,10)$,
and holomorphic modular forms on it.
In Section 3, we recall our previous work \cite{KY}, In Sections 5 and 6, following Ikeda \cite{Ik1}, 
we study the integral expression (\ref{integral}) for $\mathcal F_{f,h}$, which gives rise to a cusp form on $\frak T_2$.
We carry out the essentially same method but we have to rely on roots to describe some double coset space related to this method.
The calculation of the double cosets will be devoted in Section 4. 
In Section 7, we compute $\mathcal F_{f,h}$ explicitly using two kinds of Fourier-Jacobi expansions, and indicate that it is most likely nonvanishing.

\medskip

\textbf{Acknowledgments.} We would like to thank T. Ibukiyama, T. Ikeda, H. Katsurada, R. Lawther, A. Luzgarev, T. Moriyama and T. Sugano for their valuable comments.
In particular Lawther sent very detailed notes \cite{L1} to us on the double coset decomposition in Section \ref{double} 
and Luzgarev sent a mathematica code to the second author. 
Ikeda also pointed out several mistakes in previous version. 
Without their help, this paper could not have been finished.  

\section{Preliminaries}

\subsection{Cayley numbers and the exceptional domain}\label{T2}
In this section we refer Section 2 of \cite{KY}.
For any field $K$ whose characteristic is different from $2$ and $3$,
the Cayley numbers $\frak C_K$ over $K$ is an eight-dimensional vector space over $K$ with basis
$\{e_0=1,e_1,e_2,e_3,e_4,e_5,e_6,e_7\}$ satisfying the following rules for multiplication:
\begin{enumerate}
\item  $xe_0=e_0x=x$ for all $x\in\frak C_K$,
\item $e^2_i=-e_0$ for $i=1,\ldots, 7$,
\item $e_i(e_{i+1}e_{i+3})=(e_ie_{i+1})e_{i+3}=-e_0$ for any $i$ (mod 7).
\end{enumerate}
For each $x=\ds\sum_{i=0}^7x_ie_i\in \frak C_K$,
the map $x\mapsto \bar{x}=x_0e_0-\ds\sum_{i=1}^7x_ie_i$ defines an anti-involution on $\frak C_K$.
The trace and the norm on $\frak C_K$ are defined by
$$\tr(x):=x+\bar{x}=2x_0,\ N(x):=x\bar{x}=\sum_{i=0}^7x^2_i.
$$
The Cayley numbers $\frak C_K$ is neither commutative nor associative. We denote by $\frak o$, the integral Cayley numbers
which is a $\Z$-submodule of $\frak C_K$ given by the following basis:
$$\alpha_0=e_0,\ \alpha_1=e_1,\ \alpha_2=e_2,\ \alpha_3=-e_4,\
\alpha_4=\frac{1}{2}(e_1+e_2+e_3-e_4),\ \alpha_5=\frac{1}{2}(-e_0-e_1-e_4+e_5),$$
$$\alpha_6=\frac{1}{2}(-e_0+e_1-e_2+e_6),\ \alpha_7=\frac{1}{2}(-e_0+e_2+e_4+e_7).$$
It is known that $\frak o$ is stable under the operations of the anti-involution, multiplication, and
addition. Further we have $\tr(x),\ N(x)\in \Z$ if $x\in \frak o$. By using this integral structure,
for any $\Z$-algebra $R$, one can consider $\frak C_R=\frak o\otimes_\Z R$.

Let $\frak J_K$ be the exceptional Jordan algebra consisting of the element:
\begin{equation}\label{x}
X=(x_{ij})_{1\le i,j\le 3}=\left(\begin{array}{ccc}
a& x & y \\
\bar{x} & b & z \\
\bar{y}& \bar{z} & c
\end{array}\right),
\end{equation}
where $a,b,c\in Ke_0=K$ and $x,y,z\in \frak C_K$.

By using integral Cayley numbers, we define a lattice
$$\frak{J}(\Z):=\{X=(x_{ij})\in \frak J_\Q\ |\ \text{$x_{ii}\in \Bbb Z$, and $x_{ij}\in \frak o$ for $i\ne j$} \},
$$
and put $\frak{J}(R)=\frak{J}(\Z)\otimes_\Z R$ for any $\Z$-algebra $R$.

We define
$$R_3(K)=\{X\in \frak J_K\ |\ \det(X)\not=0 \}
$$
and define the set $R^+_3(K)$ consisting of squares of elements in $R_3(K)$.
It is known that $R^+_3(\R)$ is an open, convex cone in $\frak J_\R$.
We denote by $\overline{R^+_3(\R)}$ the closure of $R^+_3(\R)$ in $\frak J_\R\simeq \R^{27}$
with respect to Euclidean topology. For any subring $A$ of $\R$, set
$$\frak J(A)_+:=\frak J(A)\cap R^+_3(\R),\quad \frak J(A)_{\ge 0}:=\frak J(A)\cap \overline{R^+_3(\R)}.
$$
We define the exceptional domain as follows:
$$\frak T:=\{Z=X+Y\sqrt{-1}\in \frak J_\C\ |\ X,Y\in \frak J_\R,\ Y\in R^+_3(\R)\}$$
which is a complex analytic subspace of $\C^{27}$ .

Let $G$ be  the exceptional Lie group of type $E_{7,3}$ over $\Q$ which acts on $\frak T$.
Then $G(\Bbb R)$ is of real rank 3 (cf. \cite{B}). In loc.cit. Baily constructed an integral model $\mathcal G_\Z$ of $G$ over ${\rm Spec}\hspace{0.5mm}\Z$ 
and it follows from this with Proposition 1.1 of \cite{Gro} that $G(\Bbb Q_p)$ is a split group of type $E_7$ for any prime $p$.

The Satake diagram of $E_{7,3}$ is
\begin{align*}
{\text{o}}_{\beta_1}\text{------}{\bullet}_{\beta_3}{%
\text{------}}&{\bullet}_{\beta_4}{\text{------}}{\bullet}_{\beta_5}{\text{------}}{\text{o}}_{\beta_6}{\text{------}}{%
\text{o}}_{\beta_7} \\
&\Big\vert \\
&{\bullet}_{\beta_2}
\end{align*}

The $\Bbb Q$-root system is of type $C_3$, and the extended Dynkin diagram of $C_3$ is

\begin{align*}
{\text{o}}_{\lambda_0}\Longrightarrow{\text{o}}_{\lambda_1}{%
\text{------}}&{\text{o}}_{\lambda_2}{\Longleftarrow}{\text{o}}_{\lambda_3},
\end{align*}
where $\lambda_1$ corresponds to $\beta_1$, $\lambda_2$ to $\beta_6$, $\lambda_3$ to $\beta_7$, and $-\lambda_0$ is the 
maximal root in $C_3$. Here $\lambda_1,\lambda_2$ have multiplicity 8, and $\lambda_3$ has multiplicity 1.

Let $G_1=SL_2$, $G_2=Spin(2,10)$.\footnote[*]{Since we are not dealing with the exceptional group of type $G_2$, we hope that our notation will not cause confusion.}
Then $(G_1,G_2)$ is a dual pair inside $G=E_{7,3}$ (cf. \cite{Du}). 
They are given as follows: If we remove the root $\lambda_1$ in the extended Dynkin diagram, the remaining diagram is an almost direct product $G_1G_2$. More precisely, let $\theta = h_{\lambda_0}(-1)$.
Then $\theta$ is an involution whose centralizer $H=C_{E_7}(\theta)$ as an algebraic group is the almost direct product $G_1G_2$.  Then
$G_1\cap G_2=Z=\{(h_{\lambda_0}(-1), h_{\lambda_0}(-1)\}\simeq \{\pm1\}$.   
Since $G_1$ and $G_2$ are simply connected algebraic groups, one has the following exact sequence 
$$1\lra \mu_2(k)\lra  G_1(k)\times G_2(k)\lra H(k)\lra H^1({\rm Gal}(\overline{k}/k),k^\times)=k^\times/(k^\times)^2\lra 1$$
for any local field $k$ of characteristic zero. This means that $H(k)$ is strictly bigger that $G_1(k)G_2(k)\subset E_7(k)$. 
Furthermore the 2 to 1 isogeny $G_1\times G_2\lra H$ induces 
a natural inclusion $X^\ast(T_H)\hookrightarrow X^\ast(T_{G_1})\times X^\ast(T_{G_2})$ of index 2 where 
$X^\ast(T)$ stands for the character group of a torus $T$. 

We remark that $G_2(k)$ is a split group for any $p$-adic field $k$. The $\Bbb Q$-root system of $G_2$ is of type $C_2$. It is the group $\mathfrak L_2$ in \cite{B}, page 528, and it acts on the boundary component $\frak T_2$ below.

To end this section, we remark on an explicit integral model of $G_2=Spin(2,10)$. 
Since ${G_2}_\Q\subset \mathcal G_\Z$, one can define an integral model $\mathcal G_2$ of $G_2$ 
as the Zariski closure of ${G_2}_\Q$ in $G_\Z$. It follows from Proposition 1.1 of \cite{Gro} again that 
$\mathcal G_2$ is a smooth model over $\Z$. 
Then we have $G_2(\Z)=Spin(2,10)(\Q)\cap G(\Z)$. 

We can construct an explicit integral model of $G_2$ up to $\Q$-isomorphism as follows.   
There is a natural surjective map  $\iota: {G}_2\lra G_2/\{h_{\lambda_0}(-1)\}=SO(2,10)$ with kernel $\mu_2$, where $SO(2,10)$ is the 
special orthogonal group we want to define explicitly. 
Since $G_2(\Q_p)$ splits, so does $SO(2,10)(\Q_p)$ for any prime $p$. By Hasse principle, there exists a unique 
$\Q$-isomorphism class of $SO(2,10)$ which splits everywhere (Theorem 4.1.2 of \cite{Kit}). 
On the other hand  the quadratic space $V=H\perp H\perp (-E_8)$ where 
$E_8$ is the quadratic form given by the Cartan matrix of the exceptional Lie algebra of type $E_8$ and $H$ is the usual hyperbolic space, defines 
a special orthogonal group $SO(V)$ with the signature $(2,10)$ which splits at any prime $p$. Hence we have 
$SO(V)\simeq SO(2,10)$ over $\Q$.  
Then $Spin(2,10)$ is 
defined as the double cover of $SO(V)$ via the isomorphism $SO(V)\simeq SO(2,10)$ as above.  

\subsection{Hermitian symmetric domain for $GSpin(2,10)$}

Define $\frak{J}_2(R)$ as the set of all matrices of forms
$$X=\left(\begin{array}{cc}
a& x  \\
\bar{x} & b
\end{array}\right),\ a,b\in R,\ x\in \frak C_R.$$
We define the inner product on $\frak{J}_2(R)\times \frak{J}_2(R)$
by $(X,Y):=\frac 12\tr(XY+YX)$.
For any such $X$, we define $\det(X):=ab-N(x).$
For $X$ as above, $r\in R$, and $\xi=\left(\begin{array}{c}
\xi_1 \\
\xi_2
\end{array}\right),\ \xi_i\in \frak C_R\ (i=1,2)$, we have
$\left(\begin{array}{cc}
X & \xi \\
{}^t\bar{\xi} & r
\end{array}\right)\in \frak{J}(R)$.
Define
$$\frak J_2(A)_{+}=\Bigg\{\left(\begin{array}{cc}
a& x  \\
\bar{x} & b
\end{array}\right)\in\frak J_2(A) \ \Bigg|\ a,b\in A\cap \R_{>0},\ ab-N(x)>0  \Bigg\},$$
and
$$\frak J_2(A)_{\ge 0}=\Bigg\{\left(\begin{array}{cc}
a& x  \\
\bar{x} & b
\end{array}\right)\in \frak J_2(A)\ \Bigg|\ a,b\in A\cap \R_{\ge 0},\ ab-N(x)\ge 0  \Bigg\}.$$

We also define
$$\frak T_2:=\{X+Y\sqrt{-1}\in \frak J_2(\C)\ |\ X,Y\in \frak J_2(\R),\ Y\in \frak J_2(\R)_+\}.
$$

It is well-known that $\frak T_2$ is the Hermitian symmetric domain for $G_2(\R)$ which is a tube domain of
type (IV).
Since $Spin(2,10)(\R)/\{\pm 1\}\simeq SO(2,10)(\R)$, where $\{\pm 1\}$ is a subgroup in the center of $Spin(2,10)(\R)$,  
$\frak T_2$ is also the symmetric domain for $SO(2,10)(\R)$ (See Section 6 of Appendix in \cite{Sat}).
For us, it is more convenient to consider $\tilde G=PGSO(2,10)=PGSpin(2,10)$. In this case, $\frak T_2$ is also the symmetric domain for $PGSO(2,10)(\R)^0$.
Then modular forms on $\frak T_2$ can be considered as automorphic forms on $GSpin(2,10)(\A_\Q)$ with the trivial central character. 

\subsection{Modular forms on $\frak T_2$}
Recall the integral model of $G_2=Spin(2,10)$ over $\Z$ from Section \ref{T2}. 
Then one can define the arithmetic group $\Gamma_2=G_2(\Z)$ of ``level one". 
In \cite{EK}, Eie and Krieg considered an arithmetic subgroup $\Gamma'\subset \Gamma_2$, generated by the following. For $Z\in \frak T_2$, let $Z=\begin{pmatrix} z_1&w\\ \bar w&z_2\end{pmatrix}$,
where $z_1,z_2\in \Bbb H$, and $w=x+y\sqrt{-1}$ with $x,y\in \frak C_\Bbb R$, and $\bar w=\bar x+\bar y\sqrt{-1}$. Let $det(Z)=z_1z_2-w\bar w$:
\begin{enumerate}
 \item $p_B: Z\mapsto Z+B$, $B\in\frak J_2(\Bbb Z)$;
 \item $t_U: Z\mapsto {}^t \bar UZU$, $U=\begin{pmatrix} 0&1\\-1&0\end{pmatrix}$ or $U=\begin{pmatrix} 1&u\\0&1\end{pmatrix}$ for $u\in\frak o$;
 \item $\iota: Z\mapsto -Z^{-1}$, where $Z^{-1}=\frac 1{\det(Z)} \begin{pmatrix} z_2&-w\\-\bar w&z_1\end{pmatrix}$.
\end{enumerate}

If we consider $\Gamma'$ as a subgroup of $G(\Z)$, $p_B$ is the element $p_{B'}$ in \cite{KY} with $B'=\begin{pmatrix} 0&0\\0&B\end{pmatrix}$ and $B\in \frak J_2(\Bbb Z)$; and $\iota=\iota_{e_2}\iota_{e_3}$ in \cite{KY}. Also $t_U=m_{u e_{23}}\in M(\Z)$ in \cite{KY} for $U=\begin{pmatrix} 1&u\\0&1\end{pmatrix}$. If $U=\begin{pmatrix} 0&1\\-1&0\end{pmatrix}$, $t_U=m_{e_{23}}m_{-e_{32}}m_{e_{23}}$.

It is likely that $\Gamma'=\Gamma_2$, but we have not shown it yet.

For any $g\in G'(\R)$ and $Z\in \frak T_2$, one can define
a holomorphic automorphic factor $j(g,Z)\in \C$ which satisfies the cocycle condition. 
More explicitly, $j(p_B,Z)=j(t_U,Z)=1$ and $j(\iota,Z)=\det(Z)$.

Let $F$ be a holomorphic function on $\frak T_2$ which for some integer $k>0$ satisfies
$$
F(\gamma Z)=F(Z) j(\gamma,Z)^k,\quad Z\in \frak T_2,\, \gamma\in\Gamma_2.
$$
Then $F$ is called a modular form on $\frak T_2$ of weight $k$ with respect to $\Gamma_2$. 
For example, $F$ satisfies
$$F(Z+B)=F(Z),\quad F({}^t \bar UZU)=F(Z),\quad F(-Z^{-1})=\det(Z)^k F(Z),
$$
for $B\in\frak J_2(\Bbb Z)$, and $U=\begin{pmatrix} 1&u\\0&1\end{pmatrix}$ for $u\in\frak o$.

We denote by $\mathcal{M}_k(\Gamma_2)$ the space of such forms.
By Koecher principle, we do not need the holomorphy at the cusps.
For a holomorphic function $F: \frak T_2\lra \C$, consider, for $\tau\in\Bbb H$,
$$\Phi F(\tau)=\lim_{y\to \infty} F\begin{pmatrix} \tau &*\\ * & iy\end{pmatrix}.
$$
If $\Phi F=0$, $F$ is called a cusp form. Let $\mathcal{S}_k(\Gamma_2)$ be the space of cusp forms of weight $k$ with respect to $\G_2$.


By  \cite{PR}, Theorem 7.12, the strong approximation theorem holds with respect to $S=\{\infty\}$, namely,
$G'(\A)=G'(\Q)G'(\R) G'(\widehat{\Z})$, and $G_2(\Z)=G'(\Q)\cap G'(\R) G'(\widehat{\Z})$.
Hence one can associate  a Hecke eigen cusp form in $S_k(\Gamma_2)$ with
an automorphic form on $G'(\A)$ which is fixed by $G'(\widehat{\Z})$, and then we obtain a cuspidal
automorphic representation of $G'(\A)$ with the trivial central character.

\section{Ikeda type lift for $E_{7,3}$}
In this section we recall the Ikeda type construction for $E_{7,3}$ in \cite{KY}.
Let $P=MN$ be the Siegel parabolic subgroup of ${G}$ where the derived group $M^D=[M,M]$ of
the Levi subgroup $M$ is of type $E_6$. Let $\nu:M\lra GL_1$ be the similitude character (see Section 2 of \cite{KY}) and
it can be naturally extended to $P$.
Let $\Gamma={G}(\Z)$ be the arithmetic subgroup defined by Baily in \cite{B} which is
constructed by using the integral Cayley numbers $\frak o$.
For a positive integer $k\ge 6$, we constructed in \cite{KY} a non-zero Hecke eigen cusp form $F_f(Z)$ in $S_{2k+8}(\G)$
from a Hecke eigen cusp form $f=\ds\sum_{n\ge 1}c(n)q^n\in S_{2k}(SL_2(\Z))$:
For a positive integer $k\ge 6$, let $E_{2k+8}$ be the Siegel Eisenstein series on $\frak T$ of
weight $2k+8$ with respect to $\Gamma$. Then it has the Fourier expansion of form
\begin{eqnarray*}
E_{2k+8}(Z) &=& \sum_{T\in \frak J(\Z)_+} a_{2k+8}(T) \exp(2\pi\sqrt{-1}(T,Z)),\ Z\in\frak T,\\
a_{2k+8}(T) &=& C_{2k+8}\det(T)^{\frac{2k-1}{2}}\prod_{p|\det(T)} \widetilde{f}^p_T(p^{\frac{2k-1}{2}}),
\end{eqnarray*}
where $C_{2k+8}=2^{15}\displaystyle\prod_{n=0}^2 \frac {2k+8-4n}{B_{2k+8-4n}}$, and
$\widetilde{f}^p_T(X)$ is a Laurent polynomial over $\Q$ in $X$ which is depending only on $T$ and $p$.

Let $S_{2k}(SL_2(\Z))$ be the space of elliptic cusp forms of weight $2k\geq 12$ with respect to $SL_2(\Z)$.
For each normalized Hecke eigenform $f=\ds\sum_{n=1}^\infty c(n)q^n,\ q=\exp(2\pi \sqrt{-1}\tau),\ \tau\in \mathbb{H}$ in $S_{2k}(SL_2(\Z))$ and each rational prime $p$,
we define the Satake $p$-parameter $\{\alpha_p,\alpha_p^{-1}\}$ by $c(p)=p^{\frac{2k-1}{2}}(\alpha_p+\alpha^{-1}_p)$.
For such $f$, consider the following formal series on $\frak T$:
$$F_f(Z)=\sum_{T\in \frak J(\Z)_+} A(T)\exp(2\pi\sqrt{-1}(T,Z)),\ Z\in \frak T, \quad
A(T)=\det(T)^{\frac{2k-1}{2}} \prod_{p|\det(T)} \widetilde{f}_T^p(\alpha_p).
$$
Then we showed
\begin{theorem}\cite{KY}  The function $F_f(Z)$ is a non-zero Hecke eigen cusp form on $\frak T$ of weight $2k+8$ with respect to
$\G$.
\end{theorem}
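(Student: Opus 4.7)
The plan is to mimic Ikeda's original construction for $Sp_{2n}$, suitably adapted to the exceptional setting of $E_{7,3}$ acting on $\frak T$. The formal series $F_f$ is manufactured from the Fourier expansion of the Siegel Eisenstein series $E_{2k+8}$ by the substitution $p^{(2k-1)/2}\mapsto \alpha_p$ in each local factor $\widetilde{f}^p_T$; the strategy is to transfer the analytic and modular properties of $E_{2k+8}$ to $F_f$ by controlling the effect of this substitution, and then to check the Hecke eigenvalue relations directly on coefficients.

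First I would verify absolute, locally uniform convergence on $\frak T$. Deligne's bound $|\alpha_p|=1$ implies $|\widetilde{f}^p_T(\alpha_p)|$ is majorized by $\widetilde{f}^p_T(p^{(2k-1)/2})$ up to a bounded polynomial factor, so $|A(T)|$ is dominated by $a_{2k+8}(T)$ up to controllable factors and convergence is inherited from $E_{2k+8}$. Cuspidality is immediate since the expansion is supported on $\frak J(\Z)_+$ (positive definite Jordan elements), so the Siegel $\Phi$-operator annihilates $F_f$ along every cuspidal stratum. Non-vanishing follows by choosing $T_0\in \frak J(\Z)_+$ with $\det(T_0)=1$: the product in $A(T_0)$ is then empty and $A(T_0)=1$.

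The heart of the argument is $\G$-invariance. The Fourier expansion delivers invariance under the Siegel parabolic $P(\Z)$ for free. Baily's description of $\G$ exhibits it as generated by $P(\Z)$ together with a single involution analogous to $Z\mapsto -Z^{-1}$, so it suffices to check invariance under this one extra element. Following Ikeda, this reduces to a functional equation for the coefficients $A(T)$ under an involution on $\frak J(\Z)_+$, which in turn follows from a $p$-local identity on $\widetilde{f}^p_T$ --- essentially the functional equation of the local Siegel series rewritten in terms of Satake parameters, which is preserved by the substitution $p^{(2k-1)/2}\mapsto \alpha_p$. The Hecke eigenform property is then checked by computing the spherical Hecke action at each prime $p$ on Fourier coefficients; the explicit form of $A(T)$ makes the eigenvalues transparent polynomial functions of $\alpha_p$.

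The principal obstacle is the invariance step under the non-parabolic generator of $\G$. Proving the requisite local functional equation for $\widetilde{f}^p_T$ in the exceptional Jordan algebra setting is delicate because the Siegel series over $\frak J(\Z_p)$ lacks the classical theory available for symmetric matrices; one expects to handle it by explicit computation using the orbit structure of $\frak J$ under $M(\Z_p)$ and the relation of such orbits to the Bruhat--Tits building of $G(\Q_p)$, together with the split structure of $G(\Q_p)$ of type $E_7$ guaranteed by Baily's integral model and Gross's criterion.
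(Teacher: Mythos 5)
This theorem is not proved in the present paper at all: it is quoted verbatim from the authors' earlier work \cite{KY}, so the relevant comparison is with the proof there, which follows Ikeda's original method. Most of your peripheral steps are fine and essentially match that proof: convergence from Deligne's bound together with control of the coefficients of the Laurent polynomials $\widetilde{f}^p_T$, cuspidality from the support of the expansion on $\frak J(\Z)_+$, and non-vanishing from a unimodular $T_0$ (e.g.\ the identity element of $\frak J(\Z)$, for which the product is empty and $A(T_0)=\det(T_0)^{(2k-1)/2}=1$).

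The genuine gap is in the modularity step. Invariance under the extra generator $\iota: Z\mapsto -Z^{-1}$ is \emph{not} equivalent to any termwise ``functional equation for the coefficients $A(T)$ under an involution on $\frak J(\Z)_+$'': the map $\iota$ does not act on the index set of the Fourier expansion, and for a holomorphic function supported on a cone the relation $F(-Z^{-1})=\det(Z)^{2k+8}F(Z)$ is a global analytic identity (a Poisson-summation/converse-theorem type statement), not a combinatorial identity of coefficients. The local functional equation $\widetilde{f}^p_T(X)=\widetilde{f}^p_T(X^{-1})$ that you invoke is indeed an ingredient of the real proof, but it enters elsewhere: it guarantees that $A(T)$ is well defined under $\alpha_p\leftrightarrow\alpha_p^{-1}$ and feeds into the theta decomposition. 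What actually establishes modularity in \cite{KY} (as in Ikeda's $Sp_{2n}$ case) is the Fourier--Jacobi expansion: one writes the candidate $F_f$ as $\sum_S \mathcal F_S(\tau,w)e^{2\pi i (S,Z)}$ with $\mathcal F_S=\sum_\lambda \theta_{[\lambda]}\,\mathcal F_{S,\lambda}$, and proves that the $\mathcal F_{S,\lambda}$ are (vector-valued) modular forms by the ``compatible family of Eisenstein series'' argument --- the required transformation law is a Laurent-polynomial identity in the Satake parameter which holds at the infinitely many specializations $X=p^{(2k'-1)/2}$ coming from genuine Eisenstein series of varying weight, hence identically, hence at $X=\alpha_p$. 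Modularity of $F_f$ under all of $\Gamma$ is then deduced from the Jacobi-form property of the $\mathcal F_S$ together with the generators of $\Gamma$. Without this interpolation mechanism your outline has no way to pass from the definition of $A(T)$ to the $\iota$-transformation law, so the central step of the proof is missing.
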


We call $F_f$ the Ikeda type lift of $f$.
Then $F=F_f$ gives rise to a cuspidal automorphic representation
$\pi_F=\pi_\infty\otimes \otimes'_p\pi_p$
of $\bold G(\A)$.
Then $\pi_\infty$ is a holomorphic discrete series of the lowest weight $2k+8$ associated to $-(2k+8)\varpi_7$ in the notation of \cite{Bour} (cf. \cite{knapp}, page 158).
For each prime $p$, $\pi_p$ is unramified. In fact, $\pi_p$ turns out to be a degenerate principal series
\begin{equation}\label{deg}
\pi_p\simeq {\rm Ind}_{\bold P(\Q_p)}^{\bold{G}(\Q_p)}\: |\nu(g)|^{2s_p},
\end{equation}
where
$p^{s_p}=\alpha_p$ (see Section 11 of \cite{KY}).
Let $L(s,\pi_f)=\prod_p (1-\alpha_p p^{-s})(1-\alpha_p^{-1} p^{-s})$ be the
automorphic $L$-function of the cuspidal representation $\pi_f$ attached to $f$. Then

\begin{theorem}\cite{KY}  The degree $56$ standard L-function $L(s,\pi_F,St)$ of $\pi_F$ is given by
$$L(s,\pi_F,St)=L(s,{\rm Sym}^3 \pi_f)L(s,\pi_f)^2 \prod_{i=1}^4 L(s\pm i,\pi_f)^2 \prod_{i=5}^8 L(s\pm i, \pi_f),
$$
where $L(s,{\rm Sym}^3 \pi_f)$ is the symmetric cube $L$-function.
\end{theorem}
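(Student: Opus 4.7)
The plan is to compute the Satake conjugacy class $t_p \in E_7(\C)$ of each unramified local factor $\pi_p$, then decompose the $56$-dimensional fundamental representation $V(\varpi_7)$ of the dual group $E_7(\C)$ under $t_p$, and read off $L(s,\pi_F,St) = \prod_p \det(1 - t_p p^{-s} \mid V(\varpi_7))^{-1}$.

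From (\ref{deg}), $\pi_p$ is the unramified constituent of the degenerate principal series $\mathrm{Ind}_{P(\Q_p)}^{G(\Q_p)}|\nu|^{2s_p}$ with $p^{s_p}=\alpha_p$. Since the inducing character is trivial on the derived subgroup $M^D=E_6$ of the Siegel Levi, induction in stages from a Borel $B\subset P$ identifies $t_p \in \hat T$ as arising from $|\nu|^{2s_p}$ combined with the ratio $\delta_P^{1/2}\delta_B^{-1/2}|_T$ of modulus characters. Its coordinates are explicit Laurent monomials in $\alpha_p$ and $p$: the central $GL_1$-component is $\alpha_p$, while the $E_6$-component is the image of the cocharacter $\rho^\vee_{E_6}:\mathbb G_m \to T_{E_6}$.

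Next, use the classical minuscule branching
\[ V(\varpi_7)|_{E_6\times GL_1} \;=\; V_{27}\boxtimes\chi \;\oplus\; V_{27}^\ast\boxtimes\chi^{-1} \;\oplus\; \mathbf 1\boxtimes\chi^{3} \;\oplus\; \mathbf 1\boxtimes\chi^{-3} \]
for the standard generator $\chi$ of the $GL_1$-character group. The two trivial $E_6$-summands contribute eigenvalues $\alpha_p^{\pm 3}$ of $t_p$. Further decomposing $V_{27}$ under $\rho^\vee_{E_6}$, the weights on $V_{27}$ distribute as multiplicity $3$ at weight $0$, multiplicity $2$ at each of $\pm 1,\ldots,\pm 4$, and multiplicity $1$ at each of $\pm 5,\ldots,\pm 8$; this follows from the principal-$SL_2$ branching $V_{27}|_{SL_2^{\mathrm{princ}}} = V_{17}\oplus V_9 \oplus \mathbf 1$ of $E_6$ (with highest weight $\langle \varpi_1, 2\rho^\vee_{E_6} \rangle = 16$ computed via the inverse Cartan matrix), after rescaling by the factor of two between $\rho^\vee_{E_6}$ and the principal cocharacter $2\rho^\vee_{E_6}$. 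Tensoring with $\alpha_p$ from the $GL_1$ factor yields the $27$ eigenvalues $\alpha_p p^j$ on $V_{27}\boxtimes\chi$, and the conjugate eigenvalues on $V_{27}^\ast\boxtimes\chi^{-1}$.

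Finally, regrouping the $56$ eigenvalues yields the claimed product: $\alpha_p^{\pm 3}$ together with one copy each of $\alpha_p^{\pm 1}$ (from $V_{27}\oplus V_{27}^\ast$ at weight $j=0$) assemble into $L(s,\mathrm{Sym}^3\pi_f)$; two further copies of $\alpha_p^{\pm 1}$ give $L(s,\pi_f)^2$; multiplicity-$2$ eigenvalues $\alpha_p^{\pm 1}p^{\pm i}$ for $1\le i\le 4$ give $\prod_{i=1}^4 L(s\pm i,\pi_f)^2$; and multiplicity-$1$ eigenvalues for $5\le i\le 8$ give $\prod_{i=5}^8 L(s\pm i,\pi_f)$. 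The main obstacle is the weight-multiplicity calculation in $V_{27}$ along $\rho^\vee_{E_6}$: the key numerical input is the principal-$SL_2$ branching $V_{27}|_{SL_2^{\mathrm{princ}}} = V_{17}\oplus V_9 \oplus \mathbf 1$, whose verification is a combinatorial computation in the $E_6$ root system (Freudenthal's formula, or character-counting via the exponents $1,4,5,7,8,11$). Once this is established, matching the resulting $56$ eigenvalues to the claimed Euler factors is routine bookkeeping.
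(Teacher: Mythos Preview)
The present paper does not prove this theorem; it is recalled without proof from the earlier work \cite{KY}, where it is deduced from the identification (\ref{deg}) of each local component $\pi_p$ as a degenerate principal series. There is therefore no argument in this paper to compare against.

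Your outline is the natural approach and is correct. The key branching $V_{27}|_{SL_2^{\mathrm{princ}}}=V_{17}\oplus V_9\oplus V_1$ is right: equivalently, the even Betti numbers of the complex Cayley plane $E_6/P_1$ are $1,1,1,1,2,2,2,2,3,2,2,2,2,1,1,1,1$, which are exactly your weight multiplicities along $\rho^\vee_{E_6}$. The final regrouping of the $56$ eigenvalues into $L(s,\mathrm{Sym}^3\pi_f)\,L(s,\pi_f)^2\prod_{i=1}^4 L(s\pm i,\pi_f)^2\prod_{i=5}^8 L(s\pm i,\pi_f)$ is then routine and matches the statement.

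One small point you should make explicit: the $GL_1$-weights $\pm3,\pm1$ in your branching of $V(\varpi_7)$ depend on a choice of generator for the central torus of $M$ (under the fundamental coweight $\varpi_7^\vee$ the grades are $\pm\tfrac32,\pm\tfrac12$). This is harmless provided your identification ``central $GL_1$-component equals $\alpha_p$'' is made with respect to the same generator, but that compatibility has to be checked against the normalization of $\nu$ and the exponent $2s_p$ in (\ref{deg}); you have asserted it rather than verified it.
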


\section{Double Coset Decomposition}\label{double}

In order to prove Theorem \ref{main}, following \cite{Ik1}, we need to compute
a suitable representatives of the double coset space over a $p$-adic field related to the unwinding method.

This section is mainly due to R. Lawther. We thank him for a very detailed note \cite{L1}.
He gave an explicit double coset space related to what we need, but he worked over an algebraically closed field because he relied on the results in
\cite{L}. In what follows we modify his argument so that it would work over any $p$-adic field in our case.

Let $p$ be any rational prime, and $G$ to be a simply-connected algebraic group of type $E_7$ over a $p$-adic field $k$, and for simplicity, let $G=G(k)$,
$G_1=G_1(k)$, and $G_2=G_2(k)$.

Let $T$ to be a fixed maximal torus of $G$. Let $B$ be the standard Borel subgroup containing $T$.
Take roots with respect to $T$; let $\{\beta_1, \dots, \beta_7 \}$ be a simple root system, numbered as in Bourbaki \cite{Bour}.
Write roots of $E_7$ as strings of coefficients of simple roots, so that for example, the highest root is $2234321$.
Let $\Phi$ (resp. $\Phi^+$) be the set of all roots (resp. all positive roots). Let $\gamma_1=0112221$, and by adding $\gamma_1$, we get
the extended Dynkin diagram of $E_7$;
\begin{align*}
{\text{o}}_{\gamma_1}\cdots {\text{o}}_{\beta_1}\text{------}{\text{o}}_{\beta_3}{%
\text{------}}&{\text{o}}_{\beta_4}{\text{------}}{\text{o}}_{\beta_5}{\text{------}}{\text{o}}_{\beta_6}{\text{------}}{%
\text{o}}_{\beta_7} \\
&\Big\vert \\
&{\text{o}}_{\beta_2}
\end{align*}

In order to use Lawther's note \cite{L1}, we take a different centralizer from Section \ref{T2}: Let $\theta = h_7(-1)$.
Then $\theta$ is an involution whose centralizer $H=C_{E_7}(\theta)$ is of the form $A_1 D_6$. Explicitly, the roots whose root subgroups lie in $H$ are those whose $\beta_6$-coefficient is even. The simple roots of the $D_6$ are
$\gamma_1$, and $\gamma_2=\beta_1, \gamma_3=\beta_3, \gamma_4=\beta_4, \gamma_5=\beta_5, \gamma_6=\beta_2$, and that of the $A_1$ is $\beta_7$. Then 
$Z:=G_1(k)\cap G_2(k)=\{(h_{\beta_7}(-1),h_{\beta_7}(-1))\}\simeq \{\pm1\}$.
Note that $h_{\gamma_1}(-1)h_{\gamma_3}(-1)h_{\gamma_6}(-1)=h_{\beta_7}(-1)$ and
$H(k)$ contains $G_1G_2\simeq  G_1(k)\times G_2(k)/Z$.

We set $\gamma_6=\beta_6$ and $\gamma_7=\beta_7$.

\subsection{Double coset space}\label{double}

For each root $\alpha$ let $x_{\alpha}(c), c\in k$ be the corresponding root subgroup and put
$$n_{\alpha}=x_{\alpha}(1)x_{-\alpha}(-1)x_{\alpha}(1),\ y_{\alpha}=x_{\alpha}(1)n_{\alpha}x_{\alpha}(\frac{1}{2}),\ {\rm and}\
h_\alpha(c)=x_{\alpha}(c)x_{-\alpha}(-c^{-1})x_{\alpha}(c)n^{-1}_{\alpha},\ c\in k^{\times}.$$
Put $h_i=h_{\beta_i}$ for simplicity.

Let $P$ be the Siegel parabolic subgroup of $G$ corresponding to
$\{\beta_1, \dots, \beta_6\}$ which is of type $E_6 T_1 U_{27}$ over an algebraic closed field where $T_i$ denotes an $i$-dimensional torus, and $U_j$ is a unipotent group of dimension $j$.
For each element $g\in G(k)$, put $Q_g=g^{-1}P(k)g\cap H$.
Then we have the following lemma.
\begin{lemma}\label{P-double} The double coset space $P(k)\backslash G(k)/H$ is a finite set. For any $g\in G(k)$,
there exists $g'$ so that $P(k)gH=P(k)g'H$ and
$Q_{g'}$ coincides with
$Q_y$ for some $y\in \{1,n, y_{\beta_7} n, y_{\gamma_1} y_{\beta_7} n\}$, where $n = n_{\beta_6 + \beta_7}$.
\end{lemma}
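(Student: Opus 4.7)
The strategy is to combine Lawther's explicit calculation \cite{L1} over an algebraically closed field with a descent argument to the $p$-adic field $k$. Since $H=C_G(\theta)$ is the fixed-point subgroup of the involution $\theta=h_7(-1)$, the pair $(G,H)$ is symmetric, so Richardson--Springer theory already guarantees that there are only finitely many $H(\bar k)$-orbits on $G(\bar k)/P(\bar k)$. For our Siegel $P$ of type $E_6T_1U_{27}$ and $H$ of type $A_1D_6$, the note \cite{L1} computes exactly four such orbits, with representatives $y\in\{1,\,n,\,y_{\beta_7}n,\,y_{\gamma_1}y_{\beta_7}n\}$ where $n=n_{\beta_6+\beta_7}$. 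Lawther's argument proceeds via the Bruhat decomposition $G(\bar k)=\bigsqcup_{w\in W^P}PwB$ combined with the criterion that a root subgroup $x_\alpha$ lies in $H$ exactly when the $\beta_6$-coefficient of $\alpha$ is even; this allows one to enumerate candidate $H$-orbit representatives inside each Bruhat cell and check when two of them coincide.

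Given this, the task is to descend to $k$. Each of the four representatives is already defined over the prime field, since $x_\alpha(c)$, $n_\alpha$, $y_\alpha$ involve only the constants $\pm 1$ and $\tfrac12$. For any $g\in G(k)$, the $\bar k$-double coset $P(\bar k)gH(\bar k)$ equals $P(\bar k)yH(\bar k)$ for a unique $y$ in the list, so $g=pyh$ for some $p\in P(\bar k)$, $h\in H(\bar k)$. Whether $p,h$ can be chosen $k$-rational is controlled by the Galois cohomology $H^1(\mathrm{Gal}(\bar k/k),\mathrm{Stab}_y)$ of the stabilizer of $y$ under the action $(p,h)\cdot g=pgh^{-1}$ of $P\times H$ on $G$. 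Once this obstruction vanishes, we may set $g':=py$ with $p\in P(k)$, and then compute $Q_{g'}=y^{-1}(p^{-1}Pp)y\cap H=y^{-1}Py\cap H=Q_y$, proving the desired identity.

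The main obstacle is thus the vanishing of the cohomological obstruction for each of the four stabilizers $\mathrm{Stab}_y$. Each such stabilizer admits a Levi decomposition into a connected reductive $k$-split piece (contained in a Levi of $P\times H$) and a unipotent radical; Steinberg's theorem yields $H^1=0$ for the former over a $p$-adic field $k$, while unipotent groups in characteristic zero are cohomologically trivial. A case-by-case inspection of the four $Q_y$---which can be read off explicitly from the root system data---should confirm that each $\mathrm{Stab}_y$ indeed has this structure. This yields both the finiteness of $P(k)\backslash G(k)/H(k)$ and the identification of the four representatives.
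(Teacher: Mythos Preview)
Your approach is genuinely different from the paper's, and it contains a gap that is not obviously closable.

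The paper does \emph{not} descend the $\bar k$-result via $H^1$ of the full stabilizers $Q_y$. Instead it first proves an auxiliary lemma (Lemma~\ref{B-double}) describing $B(k)\backslash G(k)/H$ directly over $k$: using the bijection with twisted $B(k)$-orbits on $\{g\theta(g)^{-1}\}$ and the rationality result of Helminck--Wang \cite[Prop.~6.6]{HW}, one reduces to involutions in $N_G(T)$, and the only descent step needed is for a \emph{split torus} $Z_T(\theta'n)$, where Hilbert~90 suffices. The passage from $B$ to $P$ then follows from the surjection $B(k)\backslash G(k)/H\twoheadrightarrow P(k)\backslash G(k)/H$ together with an explicit (and lengthy) case analysis of the representatives $y_{\alpha_1}\cdots y_{\alpha_r}n'$ modulo $(P(k),H)$, carried out with Weyl-group and root-group identities that hold over $k$.

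Your route would require $H^1(k,Q_y)=1$ for each of the four $y$, and the justification you offer---``Steinberg's theorem yields $H^1=0$ for the [connected reductive $k$-split] piece''---is not correct as stated. Kneser's theorem (often attributed to Steinberg in this context) gives vanishing of $H^1$ over a $p$-adic field only for \emph{simply connected} semisimple groups; split reductive is not enough (e.g.\ $H^1(k,\mathrm{PGL}_2)\neq 1$). You would therefore need to determine the precise isogeny type of each Levi factor: for instance, is the $B_3$ in $Q_3=B_3A_1T_1U_{17}$ the group $\mathrm{Spin}(7)$ or $\mathrm{SO}(7)$? The latter has nontrivial $H^1$ over $k$. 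Moreover, not all $Q_y$ are parabolic in $H$ (for $Q_2$ the maximal torus $T_2$ has rank~$6$, not~$7$), so even the existence of a clean Levi decomposition with the asserted properties requires an argument, not just a reading of Table~1. Until these points are checked case by case, the cohomological descent remains a promise rather than a proof; the paper's method sidesteps the issue entirely by confining the Galois-cohomology input to split tori.
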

To prove this lemma, we need more arguments which would be a lengthy calculation.
Let $C$ be any fixed complete system of the
representatives of the Weyl group $W=N/T(k)$.
\begin{lemma}\label{B-double} A complete system of representatives of the
double coset space $B(k)\backslash G(k)/H$ is a finite set and it consists of the elements of form
$$y_{\alpha_1}\cdots y_{\alpha_r}n',\ \alpha_1,\ldots,\alpha_r\in \Phi^+ ,\ 0\le r \le 7$$
where $\alpha_1,\ldots,\alpha_r$ are mutually orthogonal and $n'\in N=N_{G(k)}(T(k))$ runs over the set $\{1\}\cup \{n'\in C\ | \ {}^{n'}\theta:=n'\theta n'^{-1}\not=\theta \}$. Furthermore $\alpha_i({}^{n'}\theta)=-1$ for any $i$ ($1\le i \le r$).
\end{lemma}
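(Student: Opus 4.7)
The strategy is to use the Bruhat decomposition together with an inductive reduction procedure exploiting the $\theta$-structure, in the spirit of Richardson-Springer theory of $B$-orbits on symmetric varieties. The central structural fact is that $H=C_G(\theta)\supset T(k)$ together with the root subgroups $U_\alpha$ for every $\alpha\in\Phi$ with $\alpha(\theta)=+1$; equivalently $^{n'}\!H\supset U_\alpha$ whenever $\alpha({}^{n'}\!\theta)=+1$. Finiteness will follow once the desired finite list of representatives is produced, since among $(n',\{\alpha_1,\ldots,\alpha_r\})$ there are only finitely many choices with $n'\in C$, $r\leq 7$, and mutually orthogonal $\alpha_i\in\Phi^+$.

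Given $g\in G(k)$, by Bruhat we write $g=b_1 n'_0 u$ with $b_1\in B(k)$, $n'_0\in C$, and $u\in U(k)\cap n'^{-1}_0 U^-(k) n'_0$, so $B(k)gH=B(k)n'_0 uH$. Expand $u=\prod_\alpha x_\alpha(c_\alpha)$ and sort the factors according to the sign $\alpha({}^{n'_0}\!\theta)\in\{\pm1\}$. Factors with sign $+1$ lie in $^{n'_0}\!H$ and can be shuffled into the $H$ on the right via Chevalley's commutation formula $[x_\alpha(a),x_\beta(b)]=\prod x_{i\alpha+j\beta}(\cdot)$, since every commutator root $\gamma=i\alpha+j\beta$ inherits its sign multiplicatively from $\alpha$ and $\beta$; iterating on the height of $\gamma$, only factors with sign $-1$ survive. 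For each such $x_{\alpha_i}(c_i)$, the split character $\alpha_i:T(k)\to k^\times$ is surjective, so right multiplication by a suitable $t\in T(k)\subset H$ normalizes $c_i$ to $1$. A rank-one $SL_2$-calculation in $\langle U_{\pm\alpha_i}\rangle$, together with the definition $y_{\alpha_i}=x_{\alpha_i}(1)n_{\alpha_i}x_{\alpha_i}(1/2)$, converts $x_{\alpha_i}(1)\cdot n'_0$ into $y_{\alpha_i}\cdot n'$ modulo $B(k)$ on the left, where $n'$ is a possibly modified Weyl representative. If ${}^{n'}\!\theta=\theta$ at the end of this reduction, then $n'\in H$ (since $\theta$ has order two), and the resulting element reduces further to the $n'=1$ case; iterating terminates with either $n'=1$ or ${}^{n'}\!\theta\neq\theta$, as in the statement.

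The orthogonality condition on the $\alpha_i$'s is the delicate point. If two surviving roots $\alpha_i,\alpha_j$ are not orthogonal, i.e., $\alpha_i\pm\alpha_j\in\Phi$, then $[x_{\alpha_i}(1),x_{\alpha_j}(1)]$ produces a factor $x_{\alpha_i+\alpha_j}(N)$ with $(\alpha_i+\alpha_j)({}^{n'}\!\theta)=(-1)(-1)=+1$, which is absorbable into $^{n'}\!H$; the case $\alpha_i-\alpha_j\in\Phi$ is analogous. A combinatorial rearrangement using this absorption together with braid relations in $W$ converts the configuration into pairwise orthogonal roots, with $r\leq 7=\mathrm{rank}(G)$. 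The main obstacle lies precisely in this last step, which requires a detailed case analysis of the subroot systems of $E_7$ generated by $\{\alpha_i\}$: with $126$ roots and many non-orthogonal pairs, one must verify that the rewriting procedure terminates across all configurations. This is exactly the content of Lawther's notes \cite{L1}, carried out over an algebraically closed field. The only $k$-dependent step in transferring to a $p$-adic $k$ is the torus normalization $c_i\mapsto 1$, which goes through because $T$ is $k$-split and every root character $\alpha_i:T(k)\to k^\times$ is surjective.
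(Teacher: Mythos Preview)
Your approach differs substantially from the paper's, and the key step---achieving mutual orthogonality of the $\alpha_i$---is not actually carried out.

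The paper does not manipulate root-subgroup factors directly. It uses the Richardson--Springer/Helminck--Wang framework: the map $BxH\mapsto x\theta(x)^{-1}$ identifies $B(k)\backslash G(k)/H$ with the $B(k)$-orbits on $S=\{g\theta(g)^{-1}\}$ under the twisted action $g\ast s=gs\theta(g)^{-1}$. By \cite[Prop.~6.6]{HW} every such orbit meets $N$, and the intersection point gives an involution ${}^{b_x x}\theta\in N$, well-defined up to $T(k)$-conjugacy. Lawther's Lemma~2 in \cite{L} then puts any involution in $N$, over $\bar k$, into the form $\theta' n_{\alpha_1}\cdots n_{\alpha_r}$ with mutually orthogonal $\alpha_i\in\Phi^+$ and $\alpha_i(\theta')=-1$; the paper descends this to $k$ by observing that the centralizer $Z_T(\theta' n)$ is a split $k$-torus, so the relevant $H^1$ vanishes by Hilbert~90. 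Finally $\theta'={}^{n'}\theta$ for some $n'\in N$ because $\theta'$ and $\theta$ are $G$-conjugate. The orthogonality thus comes for free from the structural classification of involutions in $N$, not from a rewriting procedure.

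Your commutator argument does not yield orthogonality. If $\alpha_i+\alpha_j\in\Phi$, the Chevalley relation lets you absorb $x_{\alpha_i+\alpha_j}(\pm1)$ into $H$, but you are still left with both $x_{\alpha_i}(1)$ and $x_{\alpha_j}(1)$ in some order, and the roots $\alpha_i,\alpha_j$ remain non-orthogonal; nothing has been gained. The ``rank-one $SL_2$-calculation'' converting $x_{\alpha_i}(1)\,n'_0$ into $y_{\alpha_i}\,n'$ modulo $B$ is asserted rather than shown: since $y_{\alpha_i}=x_{\alpha_i}(1)n_{\alpha_i}x_{\alpha_i}(\tfrac12)$ involves $n_{\alpha_i}$, you must explain where the $x_{-\alpha_i}$ factor comes from. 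Also, for absorbing a factor $x_\alpha(c)$ of $u$ into $H$ on the right one needs $\alpha(\theta)=1$, not $\alpha({}^{n'_0}\theta)=1$. Finally, the reference to \cite{L1} is misplaced: those notes address the $P\backslash G/H$ computation (Lemma~\ref{P-double}), whereas the input for the present lemma is the general structural result \cite[Lemma~2]{L}, which is not an $E_7$-specific case analysis.
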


\begin{proof} The proof is almost same as in Section 3 of \cite{L} but we have to take care of the base field because
the results in \cite{L} stated for which the base field is an algebraically closed field.

Put $S=\{g\theta(g)^{-1}\ |\ \theta(g):=\theta g\theta,\  g\in G(k)\}$. Define the action of $G(k)$ on $S$ by
$$g\ast s=gs\theta(g)^{-1}\ s\in S,\ g\in G(k).$$
Let us define a bijective map $$B(k)\backslash G(k)/H\lra \{O_{B(k)}(s)\ |\ s\in S\},\ BxK\mapsto O_{B(k)}(x\theta(x)^{-1})$$
where $O_{B(k)}(s)$ stands for the orbit of $s$ for $B(k)$ with respect to the action $\ast$ as above.
By Proposition 6.6 of \cite{HW}, $O_{B(k)}(s)\cap N\not=\emptyset$, hence there exists $b\in B(k)$ such that
$b\ast s\in N$. Since $\theta\in N$, if $s=x\theta(x)^{-1}$ we also have $(b\ast s) \theta=bx\theta(bx)^{-1}=:{}^{bx}\theta\in N$ which is an involution
(hence $({}^{bx}\theta)^2=1$).
Take another $b'\in B(k)$ so that $b'\ast s\in N$ if exists. Put $g=bb'^{-1}\in B(k)$. Then
${}^{bx}\theta$ is conjugate by $g$ to ${}^{b'x}\theta$. By using Bruhat decomposition,
${}^{bx}\theta$ is conjugate by an element of $T(k)$ to ${}^{b'x}\theta$.
Hence ${}^{bx}\theta$ is unique up to the conjugate by $T(k)$ and thereby we may denote such a $b$ by $b_x$ with the
dependence on $x$.
Summing up we have  an injective map
$$
B(k)\backslash G(k)/H\hookrightarrow \{n\in N\ |\ n^2=1\}/\stackrel{T}{\sim}, \quad BxH\mapsto {}^{b_xx}\theta
$$
where  $\stackrel{T}{\sim}$  stands for the equivalence relation of the conjugation by elements in $T$.  
We now describe the image of this map. Let $g={}^{b_xx}\theta\in N$ be an involution for some $x\in G(k)$
and $b_x\in B(k)$.
Then by the proof of Lemma 2 of \cite{L} (noting that $n_{-\alpha}=n^{-1}_\alpha=n_{\alpha}t$ for some $t\in T(k)$),
there exists $\theta'\in T(k)$ and $t\in T(\overline{k})$ ($t=t_2t_1$ for $t_1$ at line 3, p.119 of \cite{L} and
$t_2$ at line 11,p. loc.cit.) such that
$${}^{t}g=\theta'n_{\alpha_1}\cdots n_{\alpha_r},\ 0\le r\le 7$$
such that $\alpha_1,\ldots,\alpha_r\in \Phi^+$ are mutually orthogonal and they satisfy
$\alpha_i(\theta')=-1$.

We now descent $t$ to an element in $T(k)$.
Let $Z_{T}(\theta' n)$ be the centralizer of $\theta' n$ in $T$ as an algebraic group over $\overline{k}$.
Put $n=n_{\alpha_1}\cdots n_{\alpha_r}$.
It is easy to see that
$Z_{T}(\theta' n)$ is defined over $k$ and it is also a split torus.
We define  a one-cocycle on ${\rm Gal}(\overline{k}/k)$ takes the values in $Z_{T}(\theta' n)(\overline{k})$ by
$$\sigma\mapsto t(t^{-1})^\sigma.$$
Since $H^1({\rm Gal}(\overline{k}/k),Z_{T}(\theta' n)(\overline{k}))=1$ by Hilbert Theorem 90, there exists
$s\in Z_{T}(\theta' n)(\overline{k})$ such that $t(t^{-1})^\sigma=s(s^{-1})^\sigma$ for any $\sigma\in {\rm Gal}(\overline{k}/k)$.
This means that $s^{-1}t\in T(k)$ and we have
$${}^{s^{-1}t}g=s^{-1}gs=s^{-1}\theta'ns=\theta'n.$$
On the other hand $\theta'$ is conjugate to $\theta$ since $\theta'n={}^{y_{\alpha_1}\cdots y_{\alpha_r}}\theta'$.
It follows that they have to be conjugate by some $n'\in N$, hence $\theta'={}^{n'}\theta$.
This gives us the claim. The finiteness is then clear from the above description.
\end{proof}
\begin{remark} The proof of Lemma \ref{B-double} shows that Corollary 3 of \cite{L} holds for any local field $k$ of characteristic zero. 
\end{remark}
We are ready to prove Lemma \ref{P-double}.
\begin{proof}
The finiteness follows from the natural surjection $B(k)\backslash G(k)/H\lra P(k)\backslash G(k)/H$ and Lemma \ref{B-double}.

Henceforth we will make use of the mathematica code implemented by \cite{LV}.
By direct computation $n'$ runs over the set $R=\{1\}\cup \{n_\alpha\ |\ \alpha\in X\}$ where
$$
\begin{array}{rl}
X=&\{ 0000010, 0000110, 0000011, 0001110, 0000111, 0101110, 0011110, 0001111,\\
    &\ 1011110, 0111110, 0101111, 0011111, 1111110, 1011111, 0112110, 0111111,\\
    &\ 1112110, 1111111, 0112210, 0112111, 1122110, 1112210, 1112111, 0112211,\\
    &\ 1122210, 1122111, 1112211, 1123210, 1122211, 1223210, 1123211, 1223211
\}.
\end{array}
$$
In fact ${}^{n_{\alpha}}\theta=n_{\alpha}h_7(-1)n^{-1}_{\alpha}=h_{7}(-1)
h_{\alpha}((-1)^{\langle \beta_7,\alpha \rangle})\not= h_{7}(-1)$ is
equivalent to that $\langle \beta_7,\alpha \rangle$ is odd.

We shall discard extra elements among of $y_{\alpha_1}\ldots y_{\alpha_r}n', n'\in R$.
Recall that $E_6(k)\subset P(k)$ (resp. $H$) consists of roots generated by $\beta_1,\ldots,\beta_6$
(resp. $\gamma_1,\gamma_2=\beta_1,\ \gamma_3=\beta_3,\gamma_4=\beta_4,\gamma_5=\beta_5,\gamma_6=\beta_2,\beta_7$).

Assume $r=0$. We further assume that $P(k)n_\alpha H\not=P(k)H$ for $\alpha\in R$.
Then by direct computation, there exists $\beta\in \Phi$ so that $n_{\alpha}=n_\beta n n^{-1}_\beta$ and $n_{\beta}\in P(k)\cap H$
where $n$ is the element in the statement.
Hence we have $P(k)n_{\alpha}H=P(k)nH$.

Assume $r=1$. For $\alpha=\sum_{i=1}^7a_i\beta_i\in \Phi^+$, clearly $y_\alpha\in P(k)$ if $a_7=0$. Therefore the case $a_7>0$ will be
essential.
For each $n'\in R$ we compute the set $R_1(n')$
consisting of $\alpha$ so that $a_7>0$ and $\alpha({}^{n'}\theta)=-1$.
For example,
$$
\begin{array}{rl}
R_1(1)=&\{  0000011, 0000111, 0001111, 0101111, 0011111, 1011111, 0111111, 1111111,\\
       &\   0112111, 1112111, 0112211, 1122111, 1112211, 1122211, 1123211, 1223211
\}.
\end{array}
$$
By direct calculation for any $n'\in R$ and $\alpha\in R_1(n')$ we would check that there exists $g'\in G(k)$ such that
$$P(k)g'H=P(k)y_{\alpha}n'H\ {\rm and}\ Q_{g'}=Q_{y_7 n}.$$
Let us give a few examples.
For $n'=1$ and $\alpha=0000011$ we see that
$$g:=y_\alpha=n_6y_7n^{-1}_6\equiv y_7n_6\equiv y_7n_6n_7\equiv y_7n n_6\ {\rm mod}\ (P(k),H)$$
where $n_i=n_{\beta_i}$ and we use the relation $n=n_{\alpha}=n_{7}n_6n^{-1}_7$.
Put $g'=y_7n n_6$. Then one would be able to check $Q_{g'}=Q_{y_7n}$. 
The remaining cases would be done similarly. So it is omitted because it is a routine and lengthy. 
The case $r=2$ would be checked by using the calculation in case $r=1$. 
By direct calculation it is easy to check that 
the case $r\ge 3$ never happens because of the orthonormality for simple roots in question.  
\end{proof}

\subsection{An explicit structure of $Q_g$}\label{explicit}
By Lemma \ref{P-double} we may focus on the following four elements to consider $Q_g=g^{-1}P(k)g\cap H, g\in G(k)$.
The following table is made by Lawther. Here we put $H(\overline{k})=C_{G(\overline{k})}(\theta)$. 

\begin{table}[htbp]
\label{tab1}
\begin{center}
\begin{tabular}{|l|c|}
  \hline
  \text{$g\in G(k)$ } & $g^{-1}P(\overline{k})g\cap H(\overline{k})$ \\ \hline
  $g_0=1$ & $D_5 T_2 U_{11}$ \\
  $g_1=n$ & $A_5 A_1 T_1 U_{15}$ \\
  $g_2=y_{\beta_7} n$ &  $A_4 T_2 U_{21}$ \\
  $g_3=y_{\gamma_1} y_{\beta_7} n$  &   $B_3 A_1 T_1 U_{17}$ \\
\hline
\end{tabular}
\end{center}
\caption{}
\end{table}

For $i=0,1,2,3$, put $Q_i=g_i^{-1}Pg_i\cap H$. Let $Q_i=M_iN_i$ be the Levi decomposition and $T_i$ the maximal split torus in $M_i$. We now try to compute
$T_i, U_i$, and the values of the modulus character $\delta_{Q_i}$ (resp. the modulus character $\delta_{P(k)}$) on $T_i$
(resp. on $g_iT_ig^{-1}_i\subset P(k)$).
We first realize $G(k)$ in $GL_{56}(k)$ in terms of roots by using mathematica code implemented by \cite{LV}.
By using root groups we would know which entries of $P(k)=MU_{27}$ in $GL_{56}(k)$ are always zero (the number of such entries is
379).
This can be checked if we look $U^-_{27}=\{x_{\alpha}(c_\alpha)\ |\ \alpha\in\Phi^-, c_{\alpha}\in k\}$ because
the ($p$-adically) open subgroup $U^-_{27}P(k)$ is
Zariski dense in $E_7$ as an algebraic group. This gives rise to a naive criterion for $g\in G(k)$ to be an element of $P(k)$.
In what follows we denote by $|\ast|$ the normalized valuation of $k$ so that $|\varpi|=q^{-1}$ for a uniformizer of $k$ where
$q$ stands for the cardinality of the residue field of $k$.

\subsubsection{Case $Q_0$} In this case we have
$$T_0=\{h_{\gamma_1}(t_1)h_{\gamma_2}(t_2)h_{\gamma_3}(t_3)h_{\gamma_4}(t_4)h_{\gamma_5}(t_5)
h_{\gamma_6}(t_6)h_{\gamma_7}(t_7)\ |\ t_1,\ldots,t_7\in k^\times  \}$$
and $N_0=\langle x_{\alpha}(c_\alpha)\ |\ \alpha\in \Phi_0\rangle$
where
$$
\begin{array}{rl}
\Phi^0=&\{  0000001, 0112221, 1112221, 1122221, 1123221, 1123321, 1223221, 1223321,\\
       &\   1224321, 1234321,
2234321
\}.
\end{array}
$$
Then for $t=h_{\gamma_1}(t_1)h_{\gamma_2}(t_2)h_{\gamma_3}(t_3)h_{\gamma_4}(t_4)h_{\gamma_5}(t_5)
h_{\gamma_6}(t_6)h_{\gamma_7}(t_7)$ one has
$$\delta_{Q_0}(t)=|t_1|^{10}|t_2|^2\ {\rm and}\ \delta_{P(k)}(t)=|t_1t_7|^{18}.$$
Since $\delta_{P(k)}(t)=|\nu(t)|^{18}$ (see Section 6 of \cite{KY}), one concludes $\nu(t)=t_1t_2u$ for
some unit $u$ in $\mathcal{O}_k$.
In particular $\omega\circ \nu(t)=\omega(t_1t_2)$ for any unramified character $\omega$ of $k^\times$
where $\nu:P\lra GL_1$ is the similitude character.

It is easy to see that $G_1\cap T_0=\{ h_{\gamma_7}(t_7)\ t_7\in k^\times \}$ and
$G_2\cap T_0=\{ h_{\gamma_1}(t_1)\cdots h_{\gamma_6}(t_6)\ | \ t_1,\ldots,t_6\in k^\times  \}$.

\subsubsection{Case $Q_1$}
In this case we have
$$T_1=\{h_{\gamma_1}(t_1)h_{\gamma_2}(t_2)h_{\gamma_3}(t_3)h_{\gamma_4}(t_4)h_{\gamma_5}(t_5)
h_{\gamma_6}(t_6)h_{\gamma_7}(t_7)\ |\ t_1,\ldots,t_7\in k^\times  \}$$
and $N_1=\langle x_{\alpha}(c_\alpha)\ |\ \alpha\in \Phi_1\rangle$
where
$$
\begin{array}{rl}
\Phi^1=&\{  0000100, 0001100, 0101100,0011100, 1011100,  0111100,1111100 ,0112100, \\
       &\    1112100, 1122100, 1123321, 1223321, 1224321, 1234321, 2234321
\}.
\end{array}
$$

Then for $t=h_{\gamma_1}(t_1)h_{\gamma_2}(t_2)h_{\gamma_3}(t_3)h_{\gamma_4}(t_4)h_{\gamma_5}(t_5)
h_{\gamma_6}(t_6)h_{\gamma_7}(t_7)$ one has
$$\delta_{Q_1}(t)=|t_5|^{10}\ {\rm and}\ \delta_{P(k)}(g_1tg^{-1}_1)=|t_5|^{18}.$$
As seen before $\omega\circ \nu(g_1tg^{-1}_1)=\omega(t_5)$ for any unramified character $\omega$ of $k^\times$.
We also have $G_1\cap T_1=\{ h_{\gamma_7}(t_7)\ t_7\in k^\times \}$ and
$G_2\cap T_1=\{ h_{\gamma_1}(t_1)\cdots h_{\gamma_6}(t_6)\ | \ t_1,\ldots,t_6\in k^\times  \}$.

\subsubsection{Case $Q_2$}\label{Q2}
In this case we have
$$T_2=\{h_{\gamma_1}(t_5t_7)h_{\gamma_2}(t_2)h_{\gamma_3}(t_3)h_{\gamma_4}(t_4)h_{\gamma_5}(t_5)
h_{\gamma_6}(t_6)h_{\gamma_7}(t_7)\ |\ t_2,\ldots,t_7\in k^\times  \}$$
and $N_2=\langle x_{\alpha}(c_\alpha)\ |\ \alpha\in \Phi^2\rangle$
where
$$
\begin{array}{rl}
\Phi^2=&\{-0000001, 0000100, 0001100, 0101100, 0011100, 1011100, 0111100, 1111100, \\
       &\  0112100, 1112100, 1122100, 0112221, 1112221, 1122221, 1123221, 1223221, \\
       &\  1123321, 1223321, 1224321, 1234321, 2234321\}.
\end{array}
$$

Then for $t=h_{\gamma_1}(t_5t_7)h_{\gamma_2}(t_2)h_{\gamma_3}(t_3)h_{\gamma_4}(t_4)h_{\gamma_5}(t_5)
h_{\gamma_6}(t_6)h_{\gamma_7}(t_7)$ one has
$$\delta_{Q_2}(t)=|t_5|^{14}|t_7|^4\ {\rm and}\ \delta_{P(k)}(g_2tg^{-1}_2)=|t_5|^{18}.$$
As seen before, $\omega\circ \nu(g_2tg^{-1}_2)=\omega(t_5)$ for any unramified character $\omega$ of $k^\times$.
We also have $G_1\cap T_2=1$ and
$G_2\cap T_2=1$. 

The Levi of $Q_2$ is of type $A_4T_2$ and $A_4$ has simple roots  
$\beta_1,\ \gamma_3=\beta_3,\ \gamma_4=\beta_4,\ \gamma_6=\beta_2$. 
One can check that the centralizer $Z_{T_2}(A_4)=\{t\in T_2\ |\ tg=gt\ {\rm for\ any\ } g\in A_4\}$ is given by 
$$
T:=\{h_{T}(a):=h_{\gamma_1}(a^2)h_{\gamma_2}(a^2)h_{\gamma_3}(a^2)h_{\gamma_4}(a^2)h_{\gamma_5}(a)
h_{\gamma_6}(a)h_{\gamma_7}(a)\ |\ a\in k^\times \}\subset G_1G_2=K.
$$
We see that $GL_1$ is diagonally embedded in $K=G_1G_2$ via  
$\Delta:GL_1\lra T,\ a\mapsto h_{T}(a)$. 
Put $T':=\{h_{\gamma_1}(t_7)h_{\gamma_2}(t_2)h_{\gamma_3}(t_3)h_{\gamma_4}(t_4)
h_{\gamma_6}(t_6)h_{\gamma_7}(t_7)\}$ and $T'':=\{h_{\gamma_1}(t)h_{\gamma_7}(t)\ |\ t\in k^\times\}$. Then $T'A_4=T''\ltimes A_4$ makes up $GL_5$ and 
the projection $T'A_4\lra T''\simeq GL_1$ corresponds to the determinant. 

\subsubsection{Case $Q_3$}
The situation is a little bit more complicated than other cases.
Let us first observe that
$$P(k)\cap g_3Hg^{-1}_3 =P(k)\cap g_3 C_{G(k)}(\theta) g^{-1}_3 = C_{P(k)}(g_3 \theta g^{-1}_3) = C_{P(k)}(g'),$$
where $g' = h_{\beta_6}(-1) n_{\beta_7} n_{\gamma_1}$.
One can easily extend Theorem 6 of \cite{L} to the Siegel parabolic subgroup $P$ and then
we get ${\rm dim}N_3=17$. On the other hand one can consider the unipotent subgroup $U_{17}$ directly in $P \cap gHg^{-1}$ as follows.
For the 16 of the 17 root groups in $U_{17}$, there is then a 1-dimensional unipotent group diagonally
embedded in the product of the two root groups, of the form
$$\{ x_\alpha(t) g' x_\alpha(t) {g'}^{-1} : t \in k \} = \{ x_\alpha(t)  x_{g'(\alpha)}(\pm t) : t \in k \},
$$
 where the sign in the second term is determined by the structure constants.
The 17th root subgroup is simply the root subgroup corresponding to the highest root 2234321.
The 16 pairs of positive roots $\alpha$, $g'(\alpha)$ interchanged by $g'$ are as follows:

\medskip

\begin{center}
\begin{tabular}{|c|c|c|c|c|c|c|c|}
  \hline
  $\alpha$ & $g'(\alpha)$  &  $\alpha$ &  $g'(\alpha)$ & $\alpha$ &  $g'(\alpha)$  & $\alpha$ &  $g'(\alpha)$  \\ \hline
 1000000  & 1112221 & 1011110 & 1011111 & 1010000  & 1122221 & 1111110  & 1111111 \\
 1011000  & 1123221 & 1112110 & 1112111 & 1011100  & 1123321 &  1122110 & 1122111 \\
1111000  & 1223221 & 1112210 & 1112211 & 1111100  & 1223321 &  1122210 & 1122211\\
1112100  & 1224321 &1123210 & 1123211 & 1122100 & 1234321 &1223210 & 1223211 \\
\hline
\end{tabular}
\end{center}

\medskip

By matching of the dimension we may have $g^{-1}_3U_{17}g_3=N_3$.
On the other hand we have
$$T_3=\{h_{\gamma_1}(t_5t_7)h_{\gamma_2}(t^2_5)h_{\gamma_3}(t_3)h_{\gamma_4}(t_4)h_{\gamma_5}(t_5)
h_{\gamma_6}(t_6)h_{\gamma_7}(t_7)\ |\ t_3,\ldots,t_7\in k^\times  \}.$$
Then for $t=h_{\gamma_1}(t_5t_7)h_{\gamma_2}(t^2_5)h_{\gamma_3}(t_3)h_{\gamma_4}(t_4)h_{\gamma_5}(t_5)
h_{\gamma_6}(t_6)h_{\gamma_7}(t_7)$ one has
$$\delta_{Q_3}(t)=|t_5|^{18}\ {\rm and}\ \delta_{P(k)}(g_3tg^{-1}_3)=|t_5|^{18}.$$
As seen before, $\omega\circ \nu(g_3tg^{-1}_3)=\omega(t_5)$ for any unramified character $\omega$ of $k^\times$.
We also have $G_1\cap T_3=\{ h_{\gamma_7}(t_7)\ t_7\in k^\times \}$ and
We also have $G_1\cap T_2=1$ and
$G_2\cap T_2=1$.
Finally we remark that $G_1=SL_2$ is common factor of $G_1$ and $G_2$, hence there exists a
2 to 1 homomorphism
\begin{equation}\label{common}
\Delta:SL_2\lra  G_1\times G_2 \lra H
\end{equation} onto the image. Let $\iota:SL_2\lra SL_2$ be the isomorphism defined by
$\begin{pmatrix}
\alpha & \beta \\
\gamma & \delta
\end{pmatrix}
\mapsto
\begin{pmatrix}
\alpha & -\beta \\
-\gamma & \delta
\end{pmatrix}
$.
The image of $\Delta$ is naturally isomorphic to
\begin{equation}\label{image}
\{(\gamma,\iota(\gamma))\ |\ \gamma\in SL_2\}/\{\pm(I_2,I_2)\}.
\end{equation}

\section{Computation of Satake parameters}\label{csp}

In this section, we prove Proposition \ref{key} below, which is a key to the proof of Theorem \ref{main}. It is an analogue of Proposition 3.1 of \cite{Ik1}.
Recall $G_1=SL_2(\Q_p)$ and $G_2=Spin(12)(\Q_p)$.
Let $\pi_2'$ be an unramified principal series representation of $G'=GSpin(12)(\Q_p)$ with the trivial central character. We compute Satake parameters of $\pi_2'$.

Since the group $G_2$ appears as a subgroup of $E_7$, we need to
consider the restriction $\pi_2=\pi_2'|_{Spin(12)}$.

Since ${}^L GSpin(12)=GSO(12,\Bbb C)$, the Satake parameter of $\pi_2'$ is given by
$$(b_1,b_2,\ldots,b_6,b_6^{-1}b_0,\ldots,b_2^{-1}b_0,b_1^{-1}b_0)\in GSO(12,\Bbb C)
$$
for some $b_1,...,b_6\in \Bbb C^\times$, and $b_0=\omega_{\pi_2'}(p)$. Since the central character is trivial, $b_0=1$.

Let $\pi_i$ be an unramified principal series representation of $G_i$ for $i=1,2$. Then
$\pi_i={\rm Ind}^{G_i}_{B_i}\chi_i$, where $B_1, B_2$ are the standard Borel subgroups of
$G_1, G_2$, resp. and $\chi_i: B_i\lra\C^\times$ is an unramified character.
 The modulus character of each $B_i$ is given by
$$\delta_{B_1}(h_{\gamma_7}(t_7))=|t_7|^2,\ \delta_{B_2}(h_{\gamma_1}(t_1)\cdots h_{\gamma_6}(t_6))=\prod^6_{i=1}|t_i|^2.
$$
Here ``Ind" stands for the normalized induction and we will denote by ``c-Ind" the compact normalized induction.

Let $\{\beta^{\pm 1}\}$ be the Satake parameters of $\pi_1$. Then
we have
\begin{equation}\label{satake}
\chi_1(h_{\gamma_7}(p^{-1}))=\beta^2.
\end{equation}

Also we have
\begin{equation}\label{satake1}
\begin{array}{l}
\chi_2(h_{\gamma_i}(p^{-1}))=\ds\frac{b_i}{b_{i+1}}, 1\le i\le 4,\
\chi_2(h_{\gamma_6}(p^{-1}))=\frac{b_5}{b_6},\
\chi_2(h_{\gamma_5}(p^{-1}))=b_5b_6.
\end{array}
\end{equation}
Recall that $H=C_{G}(\theta)\simeq (G_1\times G_2)/Z$ where $Z\simeq \{\pm 1\}$ is diagonally embedded in both centers.
Let $\phi: G_1\times G_2\longrightarrow H$ be the isogeny. As seen before  
$\phi(G_1(\Q_p)\times G_2(\Q_p))$ is a finite index subgroup of 
$H(\Q_p)$. 
Let $B_H$ be a Borel subgroup of $H$. Let $\chi$ be a character of $B_H$ and let $\pi(\chi)$ be the spherical subquotient of 
${\rm Ind}_{B_H}^H \chi$. Let $\widetilde\chi=\chi\circ\phi$ be a character of $B_1\times B_2$ and let $\pi(\widetilde\chi)$ be the spherical subquotient of ${\rm Ind}_{B_1\times B_2}^{G_1\times G_2} \widetilde\chi$. 
Then we have a surjective map between unramified $L$-packets:
$$\Pi(H(\Q_p))\lra \Pi(G_1(\Q_p)\times G_2(\Q_p)),\quad \pi(\chi)\mapsto \pi(\widetilde{\chi}).
$$ 
Given $\chi_1,\chi_2$, unramified characters of $B_1, B_2$, resp., there exist finitely many $\chi$ of $B_H$ such that $\chi_1\otimes\chi_2^{-1}=\widetilde{\chi}$. 
Let $\pi_H=\pi(\chi)$ for any such $\chi$.
 Then 
$\pi_H$ is a subquotient of ${\rm Ind}_{\phi(G_1(\Q_p)\times G_2(\Q_p))}^{H(\Q_p)} \, \pi_1\otimes\widetilde{\pi_2}$, and if $\pi_1\otimes\widetilde{\pi_2}$ is unitary, then $\pi_H$ is unitary (Lemma 2.3 of \cite{Li}). 
We call $\pi_H$ a lift of $\pi_1\otimes\widetilde{\pi_2}$ by abuse of notation. Note that for 
$t\in T=\{\prod_{i=1}^7h_i(t_i)\ |\ t_i\in \Q^\times_p\}$, $\pi_H(t)$ acts by 
$(\pi_1\otimes\widetilde{\pi_2})(t)=\chi_1(t_7)\chi^{-1}_2(\prod_{i=1}^6h_i(t_i))$.

Let $\omega: \Q^\times_p \rightarrow \Bbb C^\times$ be an unramified unitary character and let $\alpha=\omega(p^{-1})$.

\begin{prop} \label{key}
Assume that $\pi_1\otimes\widetilde{\pi_2}$ is unitary. 
If ${\rm Hom}_{H}({\rm Ind}_P^G \, (\omega^{-2}\circ \nu)|_{H}, \pi_H)\ne 0$ for some lift $\pi_H$ of $\pi_1\otimes\widetilde{\pi_2}$, 
then as a multiset,
$\{{b_1}^{\pm 1},..., {b_6}^{\pm 1}\}$ is equal to one of the followings:
\begin{eqnarray*}
&&(I):\   \{ \varepsilon(\beta\alpha)^{\pm 1}, \ \varepsilon(\beta\alpha^{-1})^{\pm 1}, b^{\pm 1},\ (bp)^{\pm 1},\ 
(bp^2)^{\pm 1},\ (bp^3)^{\pm 1}\},
\quad {\rm or}   \\
&&(II):\ \{ \varepsilon(\beta\alpha)^{\pm 1}, \ \varepsilon(\beta\alpha^{-1})^{\pm 1}, 
\varepsilon(\beta\alpha^{-1}p)^{\pm 1},\ \varepsilon(\beta\alpha^{-1}p^2)^{\pm 1},\ 
\varepsilon(\beta\alpha^{-1}p^3)^{\pm 1},\ 
\varepsilon(\beta\alpha^{-1}p^4)^{\pm 1}\}
\end{eqnarray*}
where $\varepsilon\in \{\pm 1\}$, and $b\in\Bbb C^{\times}$.
\end{prop}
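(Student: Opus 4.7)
The plan is to apply the Bernstein--Zelevinsky geometric lemma to the restriction $(\mathrm{Ind}_P^G\,\omega^{-2}\circ\nu)|_H$ and then to translate the resulting Hom conditions into constraints on the Satake parameters via Jacquet modules. By Lemma~\ref{P-double}, the space $P(\Q_p)\backslash G(\Q_p)/H(\Q_p)$ is represented by the four elements $g_0=1$, $g_1=n$, $g_2=y_{\beta_7}n$, $g_3=y_{\gamma_1}y_{\beta_7}n$. Mackey theory provides a filtration of $(\mathrm{Ind}_P^G\,\omega^{-2}\circ\nu)|_H$ whose successive subquotients are the compactly induced modules
\[
\sigma_i \;=\; \mathrm{ind}_{Q_i}^{H}\!\Bigl((\omega^{-2}\circ\nu)^{g_i}\cdot (\delta_{P}^{1/2}\delta_{Q_i}^{-1/2})\Bigr)\Big|_{Q_i},
\]
where $Q_i=g_i^{-1}P(\Q_p)g_i\cap H$ as in Section~\ref{explicit}. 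The nonvanishing of $\mathrm{Hom}_H(\mathrm{Ind}_P^G\,\omega^{-2}\circ\nu,\pi_H)$ therefore forces $\mathrm{Hom}_H(\sigma_i,\pi_H)\neq 0$ for at least one index $i$.

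Next I would apply Frobenius reciprocity,
\[
\mathrm{Hom}_H(\sigma_i,\pi_H)\;\cong\;\mathrm{Hom}_{M_i}\!\bigl(\xi_i,\,r_{Q_i}(\pi_H)\bigr),
\]
where $M_i$ is the Levi of $Q_i$ and $\xi_i$ is the relevant central character of $M_i$ coming from $(\omega^{-2}\circ\nu)^{g_i}\delta_P^{1/2}\delta_{Q_i}^{-1/2}$. Because $\pi_H$ is a subquotient of $\mathrm{Ind}_{\phi(G_1\times G_2)}^{H}(\pi_1\otimes\widetilde\pi_2)$, its Jacquet module with respect to $Q_i$ is computable by the usual geometric lemma applied on the $G_1\times G_2$ side: its semisimplification is the sum over cosets in $W_{G_1\times G_2}/W_{M_i\cap(G_1\times G_2)}$ of Weyl translates of $\chi_1\otimes\chi_2^{-1}$, each shifted by an appropriate modulus character. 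On the $G_1=SL_2$ factor the Satake parameter is $\{\beta^{\pm1}\}$ and on the $G_2=Spin(12)$ factor it is $\{b_1^{\pm1},\ldots,b_6^{\pm1}\}$ (with $b_0=1$), so each Weyl translate takes the form of an explicit monomial in $\beta$ and the $b_j$'s.

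Case by case I would then match the character $\xi_i$ against the Weyl orbit appearing in $r_{Q_i}(\pi_H)$. Using the data of Section~\ref{explicit}:

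\textbf{Case $i=0$:} The Levi is of type $D_5$ with $G_1\cap T_0=\{h_{\gamma_7}(t_7)\}$ and $G_2\cap T_0$ a full maximal torus of $Spin(12)$; the conditions $\delta_P(t)=|\nu(t)|^{18}$ and $\delta_{Q_0}(t)=|t_1|^{10}|t_2|^2$ combined with \eqref{satake}--\eqref{satake1} will force two of the $b_j^{\pm1}$ to equal $(\beta\alpha)^{\pm1}$ and $(\beta\alpha^{-1})^{\pm 1}$ (up to sign), while the remaining four parameters form a chain $\{b,bp,bp^2,bp^3\}$ (with appropriate signs $\varepsilon$), giving family $(I)$.

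\textbf{Case $i=2$:} The Levi is of type $A_4T_2$, with $T_2\cap G_1=T_2\cap G_2=1$, so the $GL_5$ structure forces all six pairs of Satake parameters to lie in a single arithmetic progression tied to $\beta\alpha^{-1}$, producing family $(II)$.

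\textbf{Cases $i=1,3$:} A parallel computation with the Levis $A_5A_1T_1$ and $B_3A_1T_1$ will show that either the resulting constraints are already subsumed by $(I)$ or $(II)$, or that they are inconsistent with $\pi_1\otimes\widetilde\pi_2$ being unitary (e.g.\ by producing repeated eigenvalues incompatible with the chosen unitarity assumption). The hard part of the argument is exactly this bookkeeping: keeping track of the exact Weyl orbit representatives, of the sign ambiguity coming from the isogeny $\phi:G_1\times G_2\to H$ (cf.\ \eqref{common},\eqref{image}), and of the modulus shifts $\delta_P^{1/2}\delta_{Q_i}^{-1/2}$ on each $T_i$. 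Once these are reconciled, the two families $(I)$ and $(II)$ emerge as the only possibilities, completing the proof.
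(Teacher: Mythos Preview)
Your overall strategy—Mackey filtration of the restriction along the four double cosets of Lemma~\ref{P-double}, followed by Frobenius reciprocity—is exactly the paper's. However, your case-by-case conclusions are misassigned, and this is a genuine gap, not just deferred bookkeeping.

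In the paper the cosets $g_0=1$ and $g_1=n$ are \emph{eliminated by unitarity}, not absorbed into families $(I)$ or $(II)$. The point is that for $Q_0$ (Levi of type $D_5T_2$) the element $h_{\gamma_7}(p^{-1})$ lies in $T_0\cap G_1$ and, since $\nu(t)=t_1t_7u$ while $\delta_{Q_0}$ does not involve $t_7$, matching the two sides forces $\beta^2=p^{-9}$; for $Q_1$ one similarly gets $p\beta^2=1$. Both contradict the unitarity of $\pi_1$. Your claim that $Q_0$ produces family $(I)$ cannot be right: the $D_5$ Levi sits entirely in the $D_6$ factor and has no coupling to the $SL_2$ side through the similitude, so the $SL_2$ torus element yields a pure constraint on $\beta$ alone, not a relation mixing $\beta,\alpha$ with the $b_j$. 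Conversely, it is the coset $g_3=y_{\gamma_1}y_{\beta_7}n$ (Levi $B_3A_1T_1$) that yields family $(I)$: evaluating on the elements $h_{\gamma_3}(p^{-1}),h_{\gamma_4}(p^{-1}),h_{\gamma_6}(p^{-1}),h_{\gamma_1}(p^{-1})h_{\gamma_2}(p^{-2})h_{\gamma_5}(p^{-1})h_{\gamma_6}(p),h_{\gamma_1}(p^{-1})h_{\gamma_7}(p^{-1})\in T_3$ gives $b_1b_2=\alpha^2$, $b_1/b_2=\beta^2$, and the chain $b_4=pb_3$, $b_5=p^2b_3$, $b_6=p^3b_3$. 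Your assignment for $Q_2$ (family $(II)$) is correct.

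A secondary remark: the paper does not pass to Jacquet modules $r_{Q_i}(\pi_H)$ and the full geometric lemma on $G_1\times G_2$. After writing $\mathrm{Hom}_H(c\text{-}\mathrm{Ind}_{Q_i}^H\omega_i\delta_{Q_i}^{-1/2},\pi_H)=\mathrm{Hom}_{Q_i}(\widetilde{\pi_H}|_{Q_i},\omega_i^{-1})$, it simply evaluates both sides on explicit elements of $T_i$ using the formulas for $\delta_{Q_i}$, $\delta_P$, and $\nu$ computed in Section~\ref{explicit}. This is considerably lighter than tracking Weyl orbits in the Jacquet module and avoids the issue that $M_i\cap\phi(G_1\times G_2)$ need not be a Levi subgroup of $G_1\times G_2$.
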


\begin{proof}
By Lemma \ref{P-double} one can take the representatives
$\{h_n\}^r_{n=1}$ of $P(k)\bs G(k)/H$ so that $Q_{h_n}\in\{Q_i\ |\ i=0,1,2,3  \}$.
Then in the category of Grothendieck group of admissible representations we have
$${\rm Ind}_P^G \, (\omega^{-1}\circ \nu)|_{H}=\sum_{n=1}^r c{\rm -Ind}^{H}_{Q_{h_n}}\omega_n \delta^{-\frac{1}{2}}_{Q_{h_n}} $$
where $\omega_n(g)=\delta^{\frac{1}{2}}_{P(k)}(h_ngh^{-1}_n)\omega^{-2}\circ \nu(h_ngh^{-1}_n)$ for $g\in Q_{h_n}$.
Put $\omega_n=\omega_i$ if $Q_{h_n}=Q_i$.
Then by assumption there exists $i$ $(0\le i\le 3)$ such that $Q_{h_n}=Q_i$ and
$$
\begin{array}{rl}
0\not=&{\rm Hom}_{H}(c{\rm -Ind}^{H}_{Q_{i}}\omega_i \delta^{-\frac{1}{2}}_{Q_{i}},\pi_H)\\
=&{\rm Hom}_{H}(\widetilde{\pi_H},{\rm Ind}^{H}_{Q_{i}}\omega^{-1}_i \delta^{\frac{1}{2}}_{Q_{i}})\\
=&{\rm Hom}_{Q_i}(\widetilde{\pi_H}|_{Q_i},\omega^{-1}_i) \ \mbox{(by Frobenius reciprocity)} 
\end{array}
$$
In the case of $Q_0$, we observe the action of $h_{\gamma_7}(p^{-1})\in Q_0$ on both spaces.
Then one has
$\beta^2=p^{-9}$ which contradicts to the unitarity of $\pi_1$.
Similarly we observe the action of $h_{\gamma_7}(p^{-1})$ for $Q_1$. Then it gives a contradiction that 
$p\beta^2=1$. 

In the case of $Q_2$, applying (\ref{satake}) and (\ref{satake1}) to the following elements
$$h_{\gamma_2}(p^{-1}),\ 
h_{\gamma_3}(p^{-1}),\ h_{\gamma_4}(p^{-1}),\ h_{\gamma_6}(p^{-1}),
\ h_{\gamma_1}(p^{-1})h_{\gamma_5}(p^{-1}) ,\ h_{\gamma_1}(p^{-1})h_{\gamma_7}(p^{-1})\in T_2
$$
respectively, we have
\begin{equation}\label{rel02}
p\ds\frac{b_2}{b_3}=1,\
p\ds\frac{b_3}{b_4}=1,\
p\ds\frac{b_4}{b_5}=1,\
p\ds\frac{b_5}{b_6}=1,\
(p\frac{b_1}{b_2})(pb_5b_6)=p^9\alpha^2_p,\
(p^{-1}\beta^{-2})\left(p\frac{b_1}{b_2}\right)=1.
\end{equation}
From this, we obtain the Satake parameters 
$$(II): \{ \varepsilon(\beta\alpha)^{\pm 1}, \ \varepsilon(\beta\alpha^{-1})^{\pm 1}, 
\varepsilon(\beta\alpha^{-1}p)^{\pm 1},\ \varepsilon(\beta\alpha^{-1}p^2)^{\pm 1},\ 
\varepsilon(\beta\alpha^{-1}p^3)^{\pm 1},\ 
\varepsilon(\beta\alpha^{-1}p^4)^{\pm 1}\}
$$
for some $\varepsilon\in \{\pm 1\}$. 

Finally we consider the case of $Q_3$. 
For
$t=h_{\gamma_1}(t_5t_7)h_{\gamma_2}(t^2_5)h_{\gamma_3}(t_3)h_{\gamma_4}(t_4)h_{\gamma_5}(t_5)
h_{\gamma_6}(t_6)h_{\gamma_7}(t_7)$,
 we see $\omega^{-1}_3(t)=\omega^{2}(t_5)|t_5|^9\delta_{B_2}^{\frac 12}(t)=\prod_{i=1}^6 |t_i|$.
In this case, applying (\ref{satake}) and (\ref{satake1}) to the following elements
$$
h_{\gamma_3}(p^{-1}),\ h_{\gamma_4}(p^{-1}),\ h_{\gamma_6}(p^{-1}),\ h_{\gamma_1}(p^{-1})h_{\gamma_2}(p^{-2})
h_{\gamma_5}(p^{-1})h_{\gamma_6}(p),\ h_{\gamma_1}(p^{-1})h_{\gamma_7}(p^{-1})\in T_3
$$
respectively, we have
\begin{equation}\label{rel2}
p\ds\frac{b_3}{b_4}=1,\
p\ds\frac{b_4}{b_5}=1,\
p\ds\frac{b_5}{b_6}=1,\
p^3\ds\frac{b_1b_2b_6^2}{b_3^2}=p^9\alpha^2,\
(p^{-1}\beta^{-2})\left(p\frac{b_1}{b_2}\right)=1.
\end{equation}
From the first four equalities, we have $b_1b_2=\alpha^2.$ From the last equation, $\ds\frac {b_1}{b_2}=\beta^2$.
Hence $b_1^2=(\alpha\beta)^2$. Hence $b_1=\varepsilon \alpha\beta$ and $b_2=\varepsilon\ds\frac {\alpha}{\beta}$, where 
$\varepsilon=\pm 1$.

It follows from (\ref{rel2}) that
\begin{equation}\label{rel3}
b_4=pb_3,\
b_5=p^2b_3,\
b_6=p^3b_3,
\end{equation}
where $b_3\in\Bbb C^\times$. Hence the Satake parameters of $\pi_2'$ are

$$(I): \{\varepsilon (\alpha\beta)^{\pm1},\ \varepsilon(\alpha{\beta}^{-1})^{\pm 1},\ (bp^3)^{\pm 1},\ (bp^2)^{\pm 1},\ (bp)^{\pm 1},\ b^{\pm 1}\},
$$
where $\varepsilon\in\{\pm 1\}$ and $b\in\Bbb C^\times$.
\end{proof}

\section{Proof of Theorem \ref{main}} \label{main-section}
Let $\mathcal H(G_i(\Bbb A_f))$ ($i=1,2$) be the Hecke algebra for the finite adele group $G_i(\Bbb A_f)$. Then $\mathcal H(G_1(\Bbb A_f))\cdot h$ and $\mathcal H(G_2(\Bbb A_f))\cdot \mathcal{F}_{f,h}$ are the finite part of the cuspidal automorphic representations of $G_1(\Bbb A)$ and $G_2(\Bbb A)$ generated by $h$ and $\mathcal{F}_{f,h}$, resp. Here $\mathcal H(G_1(\Bbb A_f))\cdot h$ is an irreducible representation of $G_1(\Bbb A_f)$.
Let $\pi_1$ be the $p$-component of $\mathcal H(G_1(\Bbb A_f))\cdot h$. Then $\pi_1$ is an unramified principal series with the Satake parameter
$\{\beta_p^{\pm 1}\}$. On the other hand, since $\mathcal{F}_{f,h}(Z)$ is a cusp form, the representation $\mathcal H(G'(\Bbb A_f))\cdot \mathcal{F}_{f,h}$ of
$G'(\Bbb A_f)$ is unitary and of finite length, where $G'=GSpin(2,10)$. We consider the restriction to $G_2(\Bbb A_f)$, and let $\pi_2$ be the
$p$-component of some irreducible direct summand of that restriction. Then $\pi_2$ is also an unramified principal series.

Note that $\det({\rm Im} Z)^{-10} dZ$ is the invariant measure on $G'(\Z)\backslash \frak T_2$. Then if $\mathcal{F}_{f,h}\ne 0$,
$$\int_{G'(\Z)\backslash \frak T_2} \int_{SL_2(\Z)\backslash \Bbb H}  \overline{F\begin{pmatrix} Z&0\\0&\tau\end{pmatrix}} h(\tau) 
\mathcal{F}_{f,h}(Z) ({\rm Im} \, \tau)^{2k+6} \det({\rm Im}\, Z)^{2k-2}\, dZd\tau=\langle\mathcal{F}_{f,h},\mathcal{F}_{f,h}\rangle\ne 0.
$$
It follows from this that for each prime $p$, 
$$
\begin{array}{rl}
0\not=&{\rm Hom}_{H(\Bbb Q_p)}(\text{Ind}_{P(\Bbb Q_p)}^{G(\Bbb Q_p)} \,
 (\omega^{-2}_p\circ \det)|_{\phi(G_1(\Q_p)\times G_2(\Q_p))}, \pi_1\otimes \widetilde{\pi_2})\\
=&{\rm Hom}_{H(\Bbb Q_p)}(\text{Ind}_{P(\Bbb Q_p)}^{G(\Bbb Q_p)} \,
 (\omega^{-2}_p\circ \det)|_{H(\Q_p)}, {\rm Ind}^{H(\Q_p)}_{\phi(G_1(\Q_p)\times G_2(\Q_p))}\pi_1\otimes \widetilde{\pi_2})
\end{array}
$$
and this implies 
$${\rm Hom}_{H(\Bbb Q_p)}(\text{Ind}_{P(\Bbb Q_p)}^{G(\Bbb Q_p)} \, (\omega^{-2}_p\circ \det)|_{H(\Bbb Q_p)}, \pi_H)\ne 0
$$
for some lift $\pi_H$ to $H(\Q_p)$ of $\pi_1\otimes \widetilde{\pi_2}$ defined as in Proposition \ref{key}, 
where $\omega_p: \Bbb Q_p^\times\lra \Bbb C^\times$ is the unramified character determined by $\omega_p(p^{-1})=\alpha_p$. 
By Proposition \ref{key}, any irreducible component of $\mathcal H(G'(\Bbb A_f))\cdot \mathcal{F}_{f,h}$ has the Satake $p$-parameter
\begin{eqnarray*}
&&(I)_p:\ \{\varepsilon_p(\beta_p\alpha_p)^{\pm 1},\ \varepsilon_p(\beta_p\alpha_p^{-1})^{\pm 1},\  (b_p p^3)^{\pm 1},\ (b_p p^2)^{\pm 1},\ (b_p p)^{\pm 1},\ b_p^{\pm 1}\},
\quad {\rm or}   \\
&&(II)_p:\ \{ \varepsilon_p(\beta_p\alpha_p)^{\pm 1}, \ \varepsilon_p(\beta_p\alpha_p^{-1})^{\pm 1}, 
\varepsilon_p(\beta_p\alpha_p^{-1}p)^{\pm 1},\ \varepsilon_p(\beta_p\alpha_p^{-1}p^2)^{\pm 1},\ 
\varepsilon_p(\beta_p\alpha_p^{-1}p^3)^{\pm 1},\ 
\varepsilon_p(\beta_p\alpha_p^{-1}p^4)^{\pm 1}\},
\end{eqnarray*}
where $\varepsilon_p=\pm 1$ and $b_p\in\Bbb C^\times$. 

Now we assume the Langlands functorial transfer of automorphic representations of $PGSpin(2,10)(\Bbb A)$ to $GL_{12}(\Bbb A)$ as in the introduction.

Let $\Pi_{f,h}$ be an irreducible component of the cuspidal representation of $G'(\Bbb A))$ generated by $\mathcal{F}_{f,h}$. Then it is unramified at every prime $p$.
Let $\Pi$ be the transfer of $\Pi_{f,h}$ to $GL_{12}(\Bbb A)$. Then $\Pi$ is unramified at all $p$ by the property of Langlands functoriality. 
By the classification of automorphic representations of $GL_N$ \cite{JS}, $\Pi$ is the Langlands' quotient of 
$$\sigma_1|\det|^{r_1}\boxplus\cdots\boxplus\sigma_k|\det|^{r_k}\boxplus \sigma_{k+1}\boxplus\cdots\boxplus \sigma_{k+l}\boxplus \tilde\sigma_k|\det|^{-r_k}\boxplus\cdots\boxplus\tilde\sigma_1|\det|^{-r_1},
$$
where $r_1\geq r_2\geq\cdots\geq r_k>0$, and $\sigma_1,...,\sigma_{k+l}$ are unitary (irreducible) cuspidal representations of $GL_{n_i}(\Bbb A)$. Note also that if $(c_{1p},...,c_{mp})$ are Satake parameters of a cuspidal representation $\pi$ of $GL_m(\Bbb A)$, 
$p^{-\frac 12}<|c_{ip}|<p^{\frac 12}$ for each $i$. Hence
by comparing the Satake parameters, 
the Satake parameters should be either $(I)_p$ for all $p$, or $(II)_p$ for all $p$. 

Suppose the Satake parameters are $(II)_p$ for all $p$. Then
$\Pi$ is the Langlands' quotient of 
$$\Pi=\Pi_1\boxplus (\chi|\cdot |)^{\pm 1}\boxplus (\chi|\cdot |^2)^{\pm 1}\boxplus (\chi|\cdot |^3)^{\pm 1}
\boxplus (\chi|\cdot |^4)^{\pm 1},
$$ where $\chi:\Q\backslash\A^\times_\Q\lra \C^\times$ is a unitary 
idele class character, and $\Pi_1$ is an automorphic representation of $GL_4(\Bbb A)$ whose Satake parameters are
$\{ \varepsilon_p(\beta_p\alpha_p)^{\pm 1}, \ \varepsilon_p(\beta_p\alpha_p^{-1})^{\pm 1}\}$ at each $p$. The automorphy of $\Pi_1$ is explained as follows: We can see easily that $\wedge^2\Pi_1={\rm Sym}^2(\pi_f)\oplus {\rm Sym}^2(\pi_h)$. It is an automorphic representation of $GL_6(\Bbb A)$. Now the exterior square $\wedge^2: GL_4(\Bbb C)\longrightarrow GL_6(\Bbb C)$ is the composition of $\hat\phi: GL_4(\Bbb C)\longrightarrow GSO_6(\Bbb C)$ and $\iota: GSO_6(\Bbb C)\longrightarrow GL_6(\Bbb C)$, where $\iota$ is the embedding, and $\phi: GSpin_6\longrightarrow GL_4$ is the double covering map \cite{AS}. Hence the exterior square transfer is the composition of transfers from 
$GL_4(\Bbb A)\longrightarrow GSpin_6(\Bbb A)$ and $GSpin_6(\Bbb A)\longrightarrow GL_6(\Bbb A)$. Since the central character of $\Pi_1$ is trivial, it is a representation of $PGL_4\simeq PGSpin_6=PGSO_6$. Hence for representations with the trivial central character, the exterior square transfer is the transfer $PGSO_6(\Bbb A)\longrightarrow GL_6(\Bbb A)$. 
Now by the result of Arthur \cite{Art}, since $\wedge^2\Pi_1$ is automorphic, $\Pi_1$ is an automorphic representation of 
$PGSO_6(\Bbb A)\simeq PGL_4(\Bbb A)$. 

Since $\chi$ is the global unramified character, one must have $\chi=1$, i.e.,
$\alpha_p=\beta_p$ and $\varepsilon_p=1$ for all $p$. Since $f$ and $h$ have different weights, they can never be equal. Contradiction. 

Hence the Satake parameters should be $(I)_p$ for all $p$. Now we recall the classification of spherical unitary representations of $GL_N(\Bbb Q_p)$ \cite{T}: For an unramified unitary character $\chi$, let $\chi(\det_n)$ be the representation $g\longmapsto \chi(\det_n(g))$ of $GL_n(\Bbb Q_p)$. Let $\pi(\chi(\det_n),\alpha)$ be the representation of $GL_{2n}(\Bbb Q_p)$ induced by $\chi(\det_n)|\det|^{\alpha}\otimes \chi(\det_n)|\det|^{-\alpha}$, where $0<\alpha<\frac 12$. Then any spherical unitary representation of $GL_N(\Bbb Q_p)$ is induced by
$$\chi_1({\det}_{n_1})\otimes\cdots \otimes\chi_q({\det}_{n_q})\otimes \pi(\mu_1({\det}_{m_1}),\alpha_1)\otimes\cdots\otimes \pi(\mu_r({\det}_{m_r}),\alpha_r),
$$
where $n_1+\cdots+n_q+2(m_1+\cdots+m_r)=N$, $0<\alpha_1,...,\alpha_r<\frac 12$, and $\chi_1,...,\chi_q,\mu_1,...,\mu_r$ are unramified unitary characters. Hence by comparing the Satake parameters, we can see that $|b_p|=1$ for all $p$. Since
$\Pi$ is unramified everywhere, we conclude that $b_p=1$. Hence $\Pi$ is the Langlands' quotient of 
$\Pi=\Pi_1\boxplus 1\boxplus 1\boxplus |\ |^{\pm 1}\boxplus |\ |^{\pm 2}\boxplus |\ |^{\pm 3}$. 
Since $\wedge^2\Pi_1={\rm Sym}^2(\pi_f)\oplus {\rm Sym}^2(\pi_h)$, by \cite{AR}, $\Pi_1$ is of the form $\Pi_1=\sigma_1\boxtimes\sigma_2$ for $\sigma_1, \sigma_2$, cuspidal representations of $GL_2(\Bbb A)$. Since $\wedge^2(\sigma_1\boxtimes\sigma_2)=Ad(\sigma_1)\otimes\omega_{\sigma_1}\omega_{\sigma_2}\boxplus Ad(\sigma_2)\otimes\omega_{\sigma_1}\omega_{\sigma_2}$, $\omega_{\sigma_1}\omega_{\sigma_2}=1$, $Ad(\sigma_1)=Ad(\pi_f)$ and $Ad(\sigma_2)=Ad(\pi_h)$. By \cite{Ram},
$\sigma_1=\pi_f\otimes\chi_1$ and $\sigma_2=\pi_h\otimes\chi_2$ for some characters $\chi_1,\chi_2$. Hence $\Pi_1=(\pi_f\boxtimes\pi_h)\otimes \chi_1\chi_2$. However $\chi_1\chi_2$ has to be 
trivial because $\Pi_1$ is unramified everywhere. Therefore, $\Pi_1=\pi_f\boxtimes\pi_h$, and
$\epsilon_p=1$ for all $p$. This shows that $\Pi=(\pi_f\boxtimes\pi_h)\boxplus 1_{GL_7}\boxplus 1$, where $1_{GL_7}$ is the trivial representation of $GL_7(\Bbb A)$.

The Satake parameters at $p$ behave uniformly and it follows from this that $\mathcal H(G'(\Bbb A_f))\cdot \mathcal{F}_{f,h}$ is isotypic. Since it is generated by the class one vector 
$\mathcal{F}_{f,h}$, it is irreducible. It follows that $\mathcal{F}_{f,h}$ is a Hecke eigenform and gives rise to a cuspidal representation $\Pi_{f,h}$ of $G'(\Bbb A)$. We also showed that the degree 12 standard $L$-function is

$$L(s,\Pi_{f,h})=L(s,\pi_f\times \pi_h)\zeta(s)^2 \zeta(s\pm 1)\zeta(s\pm 2)\zeta(s\pm 3),
$$  
where the first $L$-function is the Rankin-Selberg $L$-function.

\section{Remark on non-vanishing hypothesis}

Recall
$$\mathcal{F}_{f,h}(Z)=\int_{SL_2(\Z)\backslash \Bbb H} F_f\begin{pmatrix} Z&0\\0&\tau\end{pmatrix} \overline{h(\tau)}
({\rm Im} \tau)^{2k+6}\, d\tau.
$$

We consider the nonvanishing question of $\mathcal{F}_{f,h}$.
We have two  Fourier-Jacobi expansions of $F_f$;

\begin{equation}\label{FJ}
F_f\begin{pmatrix} Z&w\\ {}^t w&\tau\end{pmatrix}=\sum_{m=1}^\infty \phi_m(Z,w) e^{2\pi i m\tau}=\sum_{S} \mathcal F_S(\tau,w) e^{2\pi i Tr(Z S)},
\end{equation}
where $\phi_m$ is a Jacobi cusp form of weight $2k+8$ of index $m$ as in \cite{E}. In the second sum,
$S\in \frak J_2^+(\Z)$ and $\mathcal F_S$ is a Fourier-Jacobi coefficient of index $S$ as in \cite{KY}. Here
$$\mathcal F_S(\tau,w)=\sum_{\lambda\in\Lambda} \theta_{[\lambda]}(S;\tau,w)\mathcal F_{S,\lambda}(\tau),
$$
where $\theta_{[\lambda]}(S;\tau,w)$ is a theta series and $\mathcal F_{S,\lambda}(\tau)$ is a vector-valued modular form, which is obtained from the compatible family of Eisenstein series.

\begin{lemma}  We have the estimates:
 \begin{eqnarray*} |\phi_m(Z,0)| &\ll &  \det(Y)^{-(2k+8)} m^{2k+8}, \quad Y={\rm Im}(Z),\\
|\mathcal F_S(\tau,0)|&\ll& y^{-(2k+8)}  Tr(S)^{2(2k+8)},\quad y={\rm Im}(\tau).
\end{eqnarray*}
\end{lemma}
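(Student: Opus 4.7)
The plan is to extract both Fourier--Jacobi coefficients from $F_f$ by Fourier inversion and to control the resulting integrals using the cusp-form decay of $F_f$ on the exceptional tube domain $\frak T$. The starting point is the inequality
$$|F_f(\Omega)|\ \le\ C\,\det({\rm Im}\,\Omega)^{-(2k+8)},\qquad \Omega\in\frak T, \qquad (\ast)$$
with $\det$ the Jordan cubic on $\frak J_\C$. The function $|F_f|^2\det({\rm Im}\,\Omega)^{2k+8}$ is $\Gamma$-invariant and bounded on the Siegel fundamental set by cuspidality, so $|F_f|\cdot\det({\rm Im}\,\Omega)^{(2k+8)/2}$ is globally bounded; this already implies $(\ast)$ on the region $\{\det({\rm Im}\,\Omega)\le 1\}$, and the exponential decay of a cusp form towards the deep interior of $\frak T$ handles the complementary range. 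Restricting to a block-diagonal point $\Omega=\begin{pmatrix} Z & 0 \\ 0 & \tau\end{pmatrix}$, the explicit formula for the Jordan cubic on $3\times 3$ Hermitian Cayley matrices gives $\det({\rm Im}\,\Omega) = {\rm Im}(\tau)\cdot\det(Y)$ with $Y={\rm Im}(Z)$, whence
$$\Bigl|F_f\begin{pmatrix} Z & 0 \\ 0 & \tau\end{pmatrix}\Bigr|\ \le\ C\,({\rm Im}\,\tau)^{-(2k+8)}\det(Y)^{-(2k+8)}. \qquad (\ast\ast)$$

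For the first estimate I would apply Fourier inversion in ${\rm Re}(\tau)$ to the first expansion in (\ref{FJ}): for every $y_0>0$,
$$\phi_m(Z,0)\ =\ e^{2\pi m y_0}\int_0^1 F_f\begin{pmatrix} Z & 0 \\ 0 & x+iy_0\end{pmatrix} e^{-2\pi i m x}\,dx.$$
Substituting $(\ast\ast)$ gives $|\phi_m(Z,0)|\le C\,y_0^{-(2k+8)}\det(Y)^{-(2k+8)}e^{2\pi m y_0}$ for every $y_0>0$; the Hecke-type optimal choice $y_0=(2k+8)/(2\pi m)$ then yields the first stated inequality.

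For the second estimate I would analogously invert in ${\rm Re}(Z)$ over the compact torus $\frak J_2(\R)/\frak J_2(\Z)$: for every $Y'\in\frak J_2(\R)_+$,
$$\mathcal F_S(\tau,0)\ =\ e^{2\pi(S,Y')}\int_{\frak J_2(\R)/\frak J_2(\Z)} F_f\begin{pmatrix} X+iY' & 0 \\ 0 & \tau\end{pmatrix} e^{-2\pi i(S,X)}\,dX.$$
Substituting $(\ast\ast)$ yields $|\mathcal F_S(\tau,0)|\le C\,({\rm Im}\,\tau)^{-(2k+8)}\det(Y')^{-(2k+8)}e^{2\pi(S,Y')}$, and choosing $Y'=cS^{-1}$ for a universal constant $c>0$ gives a bound of the shape $C_k\,({\rm Im}\,\tau)^{-(2k+8)}\det(S)^{2k+8}$. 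The elementary inequality $\det(S)\le\tfrac14{\rm tr}(S)^2$ for $S\in\frak J_2(\R)_+$ then converts this into the claimed ${\rm tr}(S)^{2(2k+8)}$ bound. The only step carrying genuine content is the cusp-form estimate $(\ast)$; once it is in hand the rest is a routine Hecke-style optimisation of Fourier-inversion integrals, and so that initial estimate is the main obstacle.
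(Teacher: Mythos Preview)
Your argument is correct and follows essentially the same route as the paper's: Fourier inversion over the compact real torus, the cusp-form bound $(\ast\ast)$, and optimisation over the free imaginary-part parameter. The only cosmetic difference is in the specific choices of that parameter --- the paper takes $y_0=1/m$ and $Y'=\tfrac{1}{\operatorname{tr}(S)}I_2$ (which lands directly on $\operatorname{tr}(S)^{2(2k+8)}$ without the intermediate $\det(S)$ step), whereas you take the Hecke-optimal $y_0$ and $Y'=cS^{-1}$; both yield the stated bounds.
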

\begin{proof} From the first expansion in (\ref{FJ}), for any $y>0$,
$$\phi_m(Z,0) e^{-2\pi m y}=\int_0^1 F_f\begin{pmatrix} Z&0\\ 0&\tau\end{pmatrix}\, e^{-2\pi i m x}\, dx.
$$
Here $\left|F_f\begin{pmatrix} Z&0\\ 0&\tau\end{pmatrix}\right|\ll (\det({\rm Im}(Z)) {\rm Im}(\tau))^{-(2k+8)}$. 
Set $y=\frac 1m$. Then 
$$|\phi_m(Z,0)| \ll \det(Y)^{-(2k+8)} m^{2k+8}. 
$$

From the second expansion in (\ref{FJ}),
$$\mathcal F_S(\tau,0) e^{-2\pi Tr(Y S)}=\int_{X} F_f\begin{pmatrix} Z&0\\ 0&\tau\end{pmatrix}\, e^{-2\pi i Tr(X S)}\, dX,
$$
where the integral is over $\frak T_2(\Bbb R)/\frak T_2(\Bbb Z)$. 
Set $Y=\frac 1{Tr(S)} I_2$. Then 
$$|\mathcal F_S(\tau,0)|\ll  y^{-(2k+8)} Tr(S)^{2(2k+8)}. 
$$
\end{proof}

Consider the first Fourier-Jacobi expansion. We have
$$\sum_{m=1}^\infty |\phi_m(Z,0) e^{2\pi i m\tau}|\leq \sum_{m=1}^\infty m^{2k+8} \det(Y)^{-(2k+8)} e^{-2\pi m y}\ll e^{-2\pi y} \det(Y)^{-(2k+8)}.
$$
Since $|h(\tau)|\ll e^{-2\pi y}$, 
$$\int_{SL_2(\Z)\backslash \Bbb H}  F_f\begin{pmatrix} Z&0\\0&\tau\end{pmatrix} \overline{h(\tau)}
y^{2k+6}\, dx dy
$$
converges absolutely.
Hence we can interchange the sum and integral. So

$$\mathcal{F}_{f,h}(Z)=\sum_{m=1}^\infty \phi_m(Z,0) \int_{SL_2(\Z)\backslash \Bbb H} e^{2\pi i m \tau} \overline{h(\tau)} {\rm Im}(\tau)^{2k+6}\, d\tau.
$$
Here $\phi_m(Z,0)$ is a linear combination of cusp forms on $GSpin(2,10)$.  
So it is unlikely that $\mathcal F_{f,h}$ is identically zero.

Next consider the second Fourier-Jacobi expansion of $F_f$. From the above lemma,
$$\sum_{S} |\mathcal F_S(\tau,0) e^{2\pi i Tr(Z S)}|\leq \sum_{S}  y^{-(2k+8)}  Tr(S)^{2(2k+8)} e^{-2\pi Tr(Y S)}\ll y^{-(2k+8)}.
$$
Since $|h(\tau)|\ll e^{-2\pi y}$, 
$$\int_{SL_2(\Z)\backslash \Bbb H}  F_f\begin{pmatrix} Z&0\\0&\tau\end{pmatrix} \overline{h(\tau)}
y^{2k+6}\, dx dy
$$
converges absolutely.
Hence we can interchange the sum and integral. 
So
$$\mathcal F_{f,h}(Z)=\sum_{S} A_S \, e^{2\pi i Tr(Z S)},
$$
where
$$A_S=\int_{SL_2(\Z)\backslash \Bbb H} \mathcal F_S(\tau,0) \overline{h(\omega)} {\rm Im}(\tau)^{2k+6}\, d\tau
=\sum_{\lambda\in\Lambda} \int_{SL_2(\Z)\backslash \Bbb H} \theta_{[\lambda]}(S;\tau,0)\mathcal F_{S,\lambda}(\tau)
\overline{h(\tau)} {\rm Im}(\tau)^{2k+6}\, d\tau.
$$

Here $\mathcal F_S(\tau,0)$ is a modular form of weight $2k+8$. Hence $A_S$ is the Petersson inner product of $\mathcal F_S(\tau,0)$ and $h$.
This expression shows that it is very likely that $\mathcal F_{f,h}$ is not identically zero.

\end{document}